\newtheorem{theorem}{Theorem}[section]
\newtheorem{lemma}[theorem]{Lemma}
\newtheorem{corollary}[theorem]{Corollary}
\theoremstyle{definition}
\newtheorem{example}[theorem]{Example}
\theoremstyle{remark}
\newtheorem{remark}[theorem]{Remark}
\numberwithin{equation}{section}
\begin{document}

\title[Operator equations in Hilbert $C^*$-modules]{Operator equations $AX+YB=C$ and $AXA^*+BYB^*=C$ in Hilbert $C^*$-modules}

\author[Z. Mousavi]{Z. Mousavi}
\address[Z. Mousavi]{ Department of Engineering, Abhar Branch, Islamic Azad University, Abhar, Iran.}
\email{mousavi.z@abhariau.ac.ir}

\author[R. Eskandari]{R. Eskandari }
\address[R. Eskandari]{Department of Mathematics, Faculty of Science, Farhangian University, Tehran, Iran.}
\email{eskandarirasoul@yahoo.com}

\author[M.S. Moslehian]{M. S. Moslehian}
\address[M.S. Moslehian]{Department of Pure Mathematics, Center of Excellence in
Analysis on Algebraic Structures (CEAAS), Ferdowsi University of Mashhad, P. O. Box 1159, Mashhad 91775, Iran.}
\email{moslehian@um.ac.ir and moslehian@member.ams.org}

\author[F. Mirzapour]{F. Mirzapour }
\address[F. Mirzapour]{ Department of Mathematics, Faculty of Sciences, University of Zanjan, P.O. Box 45195-313, Zanjan, Iran.}
\email{f.mirza@znu.ac.ir}

\subjclass[2010]{15A24, 46L08, 47A05, 47A62.}

\keywords{Hilbert $C^*$-module; Operator equation; Solution; Orthogonally complemented.}

\begin{abstract}
Let $A,B$ and $C$ be adjointable operators on a Hilbert $C^*$-module $\mathscr{E}$. Giving a suitable version of the celebrated Douglas theorem in the context of Hilbert $C^*$-modules, we present the general solution of the equation $AX+YB=C$ when the ranges of $A,B$ and $C$ are not necessarily closed. We examine a result of Fillmore and Williams in the setting of Hilbert $C^*$-modules. Moreover, we obtain some necessary and sufficient conditions for existence of a solution for $AXA^*+BYB^*=C$. Finally, we deduce that there exist nonzero operators $X, Y\geq 0$ and $Z$ such that $AXA^*+BYB^*=CZ$, when $A, B$ and $C$ are given subject to some conditions.
\end{abstract}

\maketitle

\section{Introduction and Preliminaries}
Recently several operator equations have been extended from matrices to infinite dimensional spaces, i.e., Hilbert spaces and Hilbert $C^*$-modules; see \cite{MMM} and references therein.
Recall that the notion of Hilbert $C^*$-module is a natural generalization of that of Hilbert space arising by replacing the field of scalars $\mathbb{C}$ by a $C^*$-algebra.\\
Generalized inverses are useful tools for investigation of solutions of operator equations in the setting of Hilbert $C^*$-modules but these inverses need the strong condition of closedness of ranges of considered operators. Fang et al. \cite{FangYuYao, FangYu} have studied the solvability of operator equations without the closedness condition on ranges of operators by employing a generalization of a known theorem of Douglas \cite[Theorem 1]{Douglas} in the framework of Hilbert $C^*$-modules. In their results, concentration is based on the idea of using more general (orthogonal) projections instead of projections such as $AA^{\dag}$. They investigated the equations $AX=B$ \cite[Theorem 1.1]{FangYuYao}, $A^*XB+B^*X^*A=C$ \cite[Corollary 2.8]{FangYu} and $AXB=C$ \cite[Theorem 3.4]{FangYu}.

Inspired by Fang et al., we investigate the solution of equations $AX+YB=C$ and $AXA^*+BYB^*=C$ without the condition of closedness of ranges.
This paper is organized as follows. First, we recall some basic information about Hilbert $C^*$-modules. In Section 2,
we present an example that shows that the conditions $CC^*\leq \lambda AA^*$ for some $\lambda > 0$ and $\mathcal{R}(C)\subseteq \mathcal{R}(A)$ are not equivalent in the setting of Hilbert $C^*$-modules, in general. It shows that Theorem 1.1 of \cite{FangYuYao} is not true in the current form. Instead, we present a suitable revision of it and provide some variants of it. The equation $AX+YB=C$ is studied in Section 3 and we present a general solution for it. In addition, we slightly extend a result of Fillmore and Williams \cite{FillmoreWilliams} to Hilbert $C^*$-modules and establish some necessary and sufficient conditions for existence of a solution for $AXA^*+BYB^*=C$. Finally, we show under some conditions that there exist nonzero (nontrivial) positive operators $X,~Y$ and a nonzero operator $Z$ such that $AXA^*+BYB^*=CZ$.

Throughout the paper, $\mathscr{A}$ denotes a $C^{*}$-algebra. A Hilbert $C^*$-module is a right $\mathscr{A}$-module equipped with an $\mathscr{A}$-valued inner product $\langle \cdot, \cdot \rangle: \mathscr{E} \times \mathscr{E} \rightarrow \mathscr{A}$ such that the induced norm $\| x\| = \| \langle x,x\rangle \|^{\frac{1}{2}}$ is complete. An inner-product $\mathscr{A}$-module $\mathscr{E}$ has an ``$\mathscr{A}$-valued norm" $|\cdot|$, defined by $|x|=\langle x,x\rangle^{\frac{1}{2}}$. Suppose that $\mathscr{E}$ and $\mathscr{F}$ are Hilbert $C^{*}$-modules. Let $\mathcal{L}(\mathscr{E},\mathscr{F})$ be the set of all bounded $\mathscr{A}$-linear maps $A: \mathscr{E}\rightarrow \mathscr{F}$ for which there is a map $A^{*}: \mathscr{F}\rightarrow \mathscr{E}$, the adjoint of $A$, such that $\langle Ax,y\rangle= \langle x,A^{*}y\rangle \,\,(x\in \mathscr{E}, y\in \mathscr{F})$. We use $\mathcal{L}(\mathscr{E})$ to denote the $C^*$-algebra $\mathcal{L}(\mathscr{E},\mathscr{E})$.
If $ A\in \mathcal{L}(\mathscr{E})$ has closed range, then the Moore-Penrose inverse of $A$ (\cite{XuSheng}), denoted by $A^\dag$, is defined as the unique element of $\mathcal{L}(\mathscr{E})$ such that
$$
AA^{\dag}A=A, \quad A^{\dag}AA^{\dag}=A^{\dag},\quad (A^{\dag}A)^*=A^{\dag}A,\quad (AA^{\dag})^*=AA^{\dag}.
$$

We use $\mathcal{R}(A)$ and $\mathcal{N}(A)$ for the range and the null space of an operator $A$, respectively. We say that a closed submodule $\mathscr{F}$ of a Hilbert $C^*$-module $\mathscr{E}$ is orthogonally complemented if $\mathscr{E}=\mathscr{F}\oplus \mathscr{F}^{\perp}$, where $\mathscr{F}^{\perp}=\{ x\in \mathscr{E} : \langle x,y\rangle=0$ for all $ y\in \mathscr{F}\}$. Evidently a Hilbert $C^*$-submodule $\mathscr{F}$ of a Hilbert $C^*$-module $\mathscr{E}$ is orthogonally complemented if and only if there exists a projection $P$ on $\mathscr{E}$, whose range is $\mathscr{F}$ and $\mathcal{R}(P)\oplus \mathcal{N}(P)=\mathscr{E}$. The following theorem is known.
\begin{theorem}\label{t21}
 $($\cite[Theorem 3.2]{Lance}$)$. Let $A\in \mathcal{L}(\mathscr{E},\mathscr{F})$. Then
\begin{enumerate}
\item $\mathcal{R}(A)$ is closed if and only if $\mathcal{R}(A^*)$ is closed, and in this case, $\mathcal{R}(A)$ and
 $\mathcal{R}(A^*)$ are orthogonally complemented with $\mathcal{R}(A)=\mathcal{N}(A^*)^{\perp}$ and
 $\mathcal{R}(A^*)=\mathcal{N}(A)^{\perp}$.
\item $\mathcal{N}(A)=\mathcal{N}(|A|)$, $\mathcal{N}(A^*)=\mathcal{R}(A)^{\perp}$ and
 $\mathcal{N}(A^*)^{\perp}=\mathcal{R}(A)^{\perp \perp}\supseteq \overline{\mathcal{R}(A)}.$
\end{enumerate}
\end{theorem}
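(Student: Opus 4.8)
The plan is to treat the two parts separately, deriving the elementary algebraic identities of (2) first and then using them to drive the closed-range dichotomy in (1).

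For part (2), everything follows from the defining property $\langle Ax,y\rangle=\langle x,A^*y\rangle$ together with positive-definiteness of the inner product (so $\langle z,z\rangle=0$ forces $z=0$). First I would show $\mathcal{N}(A^*)=\mathcal{R}(A)^{\perp}$: the inclusion $\subseteq$ is immediate, and for $\supseteq$, if $y\perp\mathcal{R}(A)$ then $\langle A^*y,A^*y\rangle=\langle AA^*y,y\rangle=0$, whence $A^*y=0$. The identity $\mathcal{N}(A)=\mathcal{N}(|A|)$ comes from $\langle |A|x,|A|x\rangle=\langle x,A^*Ax\rangle=\langle Ax,Ax\rangle$, so that $Ax=0$ if and only if $|A|x=0$. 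Finally, applying $\perp$ to $\mathcal{N}(A^*)=\mathcal{R}(A)^{\perp}$ gives $\mathcal{N}(A^*)^{\perp}=\mathcal{R}(A)^{\perp\perp}$, and the inclusion $\overline{\mathcal{R}(A)}\subseteq\mathcal{R}(A)^{\perp\perp}$ holds because $S\subseteq S^{\perp\perp}$ and $S^{\perp\perp}$ is closed for any submodule $S$.

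For part (1), the engine is the Moore--Penrose inverse. Assuming $\mathcal{R}(A)$ is closed, the theorem of Xu and Sheng recalled above furnishes $A^{\dag}\in\mathcal{L}(\mathscr{F},\mathscr{E})$ satisfying the four defining relations. Taking adjoints of those relations shows that $(A^{\dag})^*$ is a Moore--Penrose inverse of $A^*$; in particular $A^*(A^{\dag})^*A^*=A^*$, so $A^*(A^{\dag})^*$ is a bounded idempotent with range $\mathcal{R}(A^*)$, which is therefore closed. This settles the equivalence, the reverse implication following by symmetry after replacing $A$ by $A^*$. For the complementation, the crucial observation is that $AA^{\dag}$ is a \emph{self-adjoint} idempotent, hence a projection, with $\mathcal{R}(AA^{\dag})=\mathcal{R}(A)$ and $\mathcal{N}(AA^{\dag})=\mathcal{R}(A)^{\perp}=\mathcal{N}(A^*)$ by part (2); thus $\mathscr{F}=\mathcal{R}(A)\oplus\mathcal{N}(A^*)$ orthogonally and $\mathcal{R}(A)=\mathcal{N}(A^*)^{\perp}$. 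Applying the same reasoning to the self-adjoint projection $A^{\dag}A$, and using the chain $\mathcal{R}(A^{\dag}A)=\mathcal{R}(A^*(A^*)^{\dag})\subseteq\mathcal{R}(A^*)\subseteq\mathcal{N}(A)^{\perp}=\mathcal{R}(A^{\dag}A)$, yields $\mathcal{R}(A^*)=\mathcal{N}(A)^{\perp}$, orthogonally complemented.

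The main obstacle is conceptual rather than computational: in a Hilbert $C^*$-module a closed submodule need \emph{not} be orthogonally complemented, so one cannot simply invoke a projection onto $\mathcal{R}(A)$ as one would in Hilbert space. The whole weight of part (1) rests on manufacturing a genuine self-adjoint projection with range $\mathcal{R}(A)$, and this is exactly what closedness buys through the existence of $A^{\dag}$. If one prefers not to quote the Moore--Penrose existence result, the same projection can be produced analytically from the positive operator $A^*A$: closedness of $\mathcal{R}(A)$ forces $0$ to be isolated in, or absent from, $\sigma(A^*A)$, so the indicator $\chi_{(0,\infty)}$ is continuous on the spectrum and $\chi_{(0,\infty)}(A^*A)$ is the desired projection. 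Either way, verifying that closed range really does deliver such a projection is the crux, and I expect it to be the single step that genuinely uses the $C^*$-module structure beyond formal manipulation.
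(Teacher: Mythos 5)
Your part (2) is correct: those identities follow exactly as you say from adjointability and positive definiteness of the $\mathscr{A}$-valued inner product. (For context: the paper offers no proof of this statement at all; it is quoted verbatim from \cite[Theorem~3.2]{Lance}, so the comparison is with Lance's proof.) The genuine problem is part (1), where your argument is circular. The existence of the Moore--Penrose inverse of a closed-range adjointable operator --- the result of \cite{XuSheng} recalled in the paper's preliminaries --- is itself \emph{deduced from} Lance's Theorem 3.2 in the literature: one first proves $\mathscr{E}=\mathcal{N}(A)\oplus\mathcal{R}(A^*)$ and $\mathscr{F}=\mathcal{R}(A)\oplus\mathcal{N}(A^*)$, and only then defines $A^{\dag}$ as the inverse of $A\colon\mathcal{R}(A^*)\to\mathcal{R}(A)$ extended by zero on $\mathcal{N}(A^*)$. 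So quoting $A^{\dag}$ here assumes the conclusion. Your fallback has the same defect: the assertion that closedness of $\mathcal{R}(A)$ forces $0$ to be isolated in, or absent from, $\sigma(A^*A)$ is precisely the hard content of the theorem in this setting; its Hilbert-space proof combines the open mapping theorem with the orthogonal projection onto $\mathcal{N}(A)^{\perp}$, and the availability of such projections is exactly what is in question in a Hilbert $C^*$-module. You correctly name this step as the crux and then assert it rather than prove it; once it is granted, everything else you write is routine, which signals that the whole weight of the theorem has been pushed into the unproved step.

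A non-circular argument (essentially Lance's) runs through the open mapping theorem directly. If $t\in\mathcal{L}(\mathscr{E},\mathscr{F})$ is surjective, OMT gives $c>0$ such that every $y\in\mathscr{F}$ has a preimage $x$ with $\|x\|\le c\|y\|$; then $\|y\|^{2}=\|\langle tx,y\rangle\|=\|\langle x,t^{*}y\rangle\|\le c\|y\|\,\|t^{*}y\|$ shows $t^{*}$ is bounded below, hence so is the positive operator $tt^{*}$, and a functional-calculus argument (nonzero continuous functions of $tt^{*}$ act nontrivially on $\mathscr{F}$) shows that a positive operator bounded below is invertible in $\mathcal{L}(\mathscr{F})$. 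Then $t^{*}(tt^{*})^{-1}t$ is an explicit self-adjoint idempotent giving $\mathscr{E}=\mathcal{R}(t^{*})\oplus\mathcal{N}(t)$. The closed-range case reduces to the surjective one because $t$, viewed as a map onto $G=\mathcal{R}(t)$, is adjointable with adjoint $t^{*}|_{G}$; applying the surjective case and then repeating the argument with $t^{*}$ in place of $t$ yields both decompositions and the closedness of $\mathcal{R}(A^{*})$. Alternatively, your spectral route can be repaired without $A^{\dag}$: if $0$ were a non-isolated point of $\sigma(A^{*}A)$, functional calculus produces unit vectors $y_{n}\in\mathcal{R}(A^{*})$ with $\|Ay_{n}\|\to 0$; by OMT (using closedness of $\mathcal{R}(A)$) choose $x_{n}$ with $Ax_{n}=Ay_{n}$ and $\|x_{n}\|\le c\|Ay_{n}\|\to 0$; then $y_{n}-x_{n}\in\mathcal{N}(A)$, and writing $y_{n}=A^{*}w_{n}$ gives $\langle y_{n},y_{n}\rangle=\langle y_{n},x_{n}\rangle$, whence $1=\|y_{n}\|^{2}\le\|x_{n}\|\to 0$, a contradiction. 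An argument at this level --- not a citation of $A^{\dag}$ or an unproved spectral claim --- is what part (1) must contain.
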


 For any $A\in \mathcal{L}(\mathscr{E},\mathscr{F})$, if $\overline{\mathcal{R}(A^*)}$ is orthogonally complemented, then $\overline{\mathcal{R}(A^*)}^{\perp}=\mathcal{N}(A)$ and the orthogonal decomposition $\mathscr{E}=\overline{\mathcal{R}(A^*)} \oplus \mathcal{N}(A)$ is valid. Let $P_{A^*}$ be the projection of $\mathscr{E}$ onto $\overline{\mathcal{R}(A^*)}$. Then $P_{A^*}A^*=A^*$ and $N_{A}A^*=0$, where $N_{A}=I-P_{A^*}$, where $I$ denotes the identity operator on $\mathcal{E}$; cf. \cite[p.~21]{Lance}.

 The so-called ``reduced solution'' that is defined in the following Theorem will be used throughout the paper.

 \begin{theorem}\label{t23}
 $($\cite[Theorem 3.1]{FangYu}$)$. Let $A\in \mathcal{L}(\mathscr{E},\mathscr{F})$, $B\in \mathcal{L}(\mathscr{G},\mathscr{H})$ and $C\in \mathcal{L}(\mathscr{G},\mathscr{F})$.
 Suppose that $\overline{\mathcal{R}(A^*)}$ and $\overline{\mathcal{R}(B)}$ are orthogonally complemented.
If
 $$\mathcal{R}(C)\subseteq \mathcal{R}(A)\quad and \quad \overline{\mathcal{R}(C^*)}\subseteq \mathcal{R}(B^*)$$
$$(or \quad \overline{\mathcal{R}(C)}\subseteq \mathcal{R}(A)\quad and \quad\mathcal{R}(C^*)\subseteq \mathcal{R}(B^*)),$$
then $AXB=C$ has a unique solution $D\in \mathcal{L}(\mathscr{H},\mathscr{E})$ such that
$\mathcal{R}(D)\subseteq \mathcal{N}(A)^{\perp}$ and $\mathcal{R}(D^*)\subseteq \mathcal{N}(B^*)^{\perp}$,
which is called the reduced solution, and the general solution to $AXB=C$ is of the form
$$X=D+N_AV_1+V_2N_{B^*},$$ where $V_1,V_2 \in \mathcal{L}(\mathscr{H},\mathscr{E})$.
\end{theorem}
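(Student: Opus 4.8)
The plan is to reduce the two-sided equation $AXB=C$ to two one-sided equations, each solved by the one-sided Douglas-type theorem for Hilbert $C^*$-modules of Fang et al.\ \cite{FangYuYao}, and then to read off uniqueness and the general form from the projection relations $AP_{A^*}=A$, $AN_A=0$ and $N_{B^*}B=0$. I work under the first set of hypotheses $\mathcal{R}(C)\subseteq\mathcal{R}(A)$ and $\overline{\mathcal{R}(C^*)}\subseteq\mathcal{R}(B^*)$, the second set being symmetric (solve the right factor first and pass to adjoints). First I solve $AX=C$: since $\mathcal{R}(C)\subseteq\mathcal{R}(A)$ and $\overline{\mathcal{R}(A^*)}$ is orthogonally complemented, the one-sided theorem yields a reduced solution $C_1$ with $AC_1=C$ and $\mathcal{R}(C_1)\subseteq\overline{\mathcal{R}(A^*)}=\mathcal{N}(A)^{\perp}$. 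Next I solve $XB=C_1$, i.e.\ the adjoint equation $B^*X^*=C_1^{*}$, obtaining a reduced solution $D'$ with $\mathcal{R}(D'^{*})\subseteq\overline{\mathcal{R}(B)}=\mathcal{N}(B^*)^{\perp}$, and I set $D:=P_{A^*}D'$. Because $P_{A^*}C_1=C_1$ and $AP_{A^*}=A$, this $D$ satisfies $ADB=AC_1=C$, while $\mathcal{R}(D)\subseteq\mathcal{R}(P_{A^*})=\mathcal{N}(A)^{\perp}$ and $\mathcal{R}(D^*)\subseteq\mathcal{R}(D'^{*})\subseteq\mathcal{N}(B^*)^{\perp}$, which are precisely the two range constraints required of the reduced solution.

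The crux, and the step I expect to be the main obstacle, is checking that the second one-sided equation is solvable, i.e.\ that $\mathcal{R}(C_1^{*})\subseteq\mathcal{R}(B^*)$. I would first record the kernel identity $\mathcal{N}(C_1)=\mathcal{N}(C)$: the inclusion $\mathcal{N}(C_1)\subseteq\mathcal{N}(C)$ is clear from $C=AC_1$, and if $Cg=0$ then $AC_1g=0$, so $C_1g\in\mathcal{N}(A)\cap\mathcal{R}(C_1)\subseteq\mathcal{N}(A)\cap\mathcal{N}(A)^{\perp}=\{0\}$, using the complementation of $\overline{\mathcal{R}(A^*)}$. Over a Hilbert space this gives at once $\overline{\mathcal{R}(C_1^{*})}=\mathcal{N}(C_1)^{\perp}=\mathcal{N}(C)^{\perp}=\overline{\mathcal{R}(C^*)}$, hence $\mathcal{R}(C_1^{*})\subseteq\overline{\mathcal{R}(C^*)}\subseteq\mathcal{R}(B^*)$; but in a Hilbert $C^*$-module the identity $\mathcal{N}(C)^{\perp}=\overline{\mathcal{R}(C^*)}$ may fail for lack of orthogonal complementation, so this shortcut is unavailable. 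Instead I would read the finer inclusion $\mathcal{R}(C_1^{*})\subseteq\overline{\mathcal{R}(C^*)}$ off the construction of the reduced solution, in which $C_1^{*}$ is obtained as a limit of operators of the form $C^{*}T$ (so that each $C_1^{*}e$ lies in $\overline{\mathcal{R}(C^*)}$); combined with $\overline{\mathcal{R}(C^*)}\subseteq\mathcal{R}(B^*)$ this licenses the second application of the one-sided theorem. It is exactly here that the non-closedness of the ranges bites.

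Uniqueness follows by taking the difference $W=D_1-D_2$ of two reduced solutions, so that $AWB=0$, $\mathcal{R}(W)\subseteq\mathcal{N}(A)^{\perp}$ and $\mathcal{R}(W^*)\subseteq\mathcal{N}(B^*)^{\perp}$. Since $A$ is injective on $\mathcal{N}(A)^{\perp}=\overline{\mathcal{R}(A^*)}$ (any element of $\mathcal{N}(A)\cap\mathcal{N}(A)^{\perp}$ vanishes) and $\mathcal{R}(WB)\subseteq\mathcal{R}(W)\subseteq\mathcal{N}(A)^{\perp}$, the relation $AWB=0$ forces $WB=0$; taking adjoints gives $\mathcal{R}(W^*)\subseteq\mathcal{N}(B^*)$, and with $\mathcal{R}(W^*)\subseteq\mathcal{N}(B^*)^{\perp}$ this yields $W^*=0$, so $D$ is unique.

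For the general solution I write an arbitrary solution as $X=D+W$ with $AWB=0$ and decompose $W=N_AW+P_{A^*}W$ via $I=P_{A^*}+N_A$ on $\mathscr{E}$. The summand $N_AW$ is already of the form $N_AV_1$. For $P_{A^*}W$, applying $A(\cdot)B$ and using $AP_{A^*}=A$, $AN_A=0$ gives $AP_{A^*}WB=AWB=0$; as $\mathcal{R}(P_{A^*}WB)\subseteq\mathcal{N}(A)^{\perp}$ and $A$ is injective there, $P_{A^*}WB=0$, so $B^*W^*P_{A^*}=0$ and therefore $\mathcal{R}(W^*P_{A^*})\subseteq\mathcal{N}(B^*)$, i.e.\ $N_{B^*}W^*P_{A^*}=W^*P_{A^*}$. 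Taking adjoints gives $P_{A^*}W=P_{A^*}WN_{B^*}=V_2N_{B^*}$, whence $X=D+N_AV_1+V_2N_{B^*}$. The converse is immediate, since $A(N_AV_1)B=(AN_A)V_1B=0$ and $A(V_2N_{B^*})B=AV_2(N_{B^*}B)=0$ show that every such $X$ solves $AXB=C$; this completes the description of the solution set.
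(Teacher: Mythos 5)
Your proof is correct, but there is no in-paper argument to compare it against: the paper states this result purely as a quotation of \cite[Theorem 3.1]{FangYu} and gives no proof. Your derivation is the natural self-contained one from the one-sided result (Theorem \ref{t22}): solve $AX=C$ to get a reduced solution $C_1$, check $\mathcal{R}(C_1^{*})\subseteq\overline{\mathcal{R}(C^{*})}\subseteq\mathcal{R}(B^{*})$, solve $B^{*}X^{*}=C_1^{*}$, and assemble $D=P_{A^{*}}D'$; uniqueness and the homogeneous solutions then follow from $\mathcal{N}(A)\cap\mathcal{N}(A)^{\perp}=\{0\}$, $\mathcal{N}(B^{*})\cap\mathcal{N}(B^{*})^{\perp}=\{0\}$, and the decomposition $I=P_{A^{*}}+N_A$, exactly as you argue, and the second hypothesis set does reduce to the first by passing to adjoints. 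The one place where your write-up leans on something not strictly available --- ``the construction of the reduced solution, in which $C_1^{*}$ is obtained as a limit of operators of the form $C^{*}T$'' --- is easily made rigorous without peeking inside any construction: the crux inclusion $\mathcal{R}(C_1^{*})\subseteq\overline{\mathcal{R}(C^{*})}$ follows from the properties displayed in Theorem \ref{t22} itself, namely $C_1^{*}A^{*}y=C^{*}y$ for $y\in\mathscr{F}$ and $C_1^{*}|_{\mathcal{N}(A)}=0$. Indeed, given $e\in\mathscr{E}$, write $e=e_1+e_2$ with $e_1\in\overline{\mathcal{R}(A^{*})}$ and $e_2\in\mathcal{N}(A)$ (possible since $\overline{\mathcal{R}(A^{*})}$ is orthogonally complemented), choose $y_n$ with $A^{*}y_n\to e_1$, and use continuity of $C_1^{*}$ to get $C_1^{*}e=\lim_n C^{*}y_n\in\overline{\mathcal{R}(C^{*})}$. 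With that substitution, every step --- existence, the two range constraints on $D$, uniqueness, and both directions of the description of the general solution --- is sound.
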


\section{On a result of Fang et al.}
The condition of having an orthogonal complement for the closure of a range is weaker than that of having a closed range through solving operator equations. The following example gives an operator acting on a Hilbert $C^*$-module, which does not have closed range but the closure of its range is orthogonally complemented.

\begin{example}\label{ex1}
Let $H$ be a Hilbert space and $\mathscr{A}$ be a unital $C^*$-algebra. Recall that the Hilbert $\mathscr{A}$-module $\oplus\mathscr{A}_i$, where each $\mathscr{A}_i$ is a copy of $\mathscr{A}$ consists of all $(a_i)\in \oplus\mathscr{A}_i $ such that $\sum \langle a_i,a_i\rangle$ is norm-convergent in $\mathscr{A}$. Suppose that
$T: \oplus\mathscr{A}_i\longrightarrow \oplus\mathscr{A}_i$ is defined by $(a_i) \mapsto (0,\frac{1}{1}a_2,0,\frac{1}{2}a_4,0, \cdots )$. Let $1$ be the unit element of $C^*$-algebra $\mathscr{A}$ and $x_{2n}=(1,1,\cdots,1,0,0, \cdots)$, whose first $2n$ entries are 1 and the rest are $0$. We have
 $$T(x_{2n})=(0,1,0,\frac{1}{2}, \cdots,\frac{1}{n},0,0, \cdots)\in \mathcal{R}(T).$$
Letting $n\rightarrow\infty$, we get $T(x_{2n})\rightarrow (0,1,0,\frac{1}{2},0,\frac{1}{3},0, \cdots)$, which does not belong to $\mathcal{R}(T)$ because if there exists an element $z=(b_i)\in \oplus\mathscr{A}_i$ such that $T(z)=(0,1,0,\frac{1}{2},0,\frac{1}{3},0, \cdots)$, then $b_i=1$ for all $i$, which gives rise to a contradiction due to $\sum\langle1,1\rangle$ is not convergent. Thus $\mathcal{R}(T)$ is not closed.

Next, we show that $\overline{\mathcal{R}(T)}$ is orthogonally complemented. For any $(c_i)\in \oplus\mathscr{A}_i $, we have
$$ (c_i)=(c_1,0,c_3,0, \cdots)+(0,c_2,0,c_4,0, \cdots),$$ where $(c_1,0,c_3,0, \cdots) \in \overline{\mathcal{R}(T)}^{\perp}$. Finally, it is sufficient to put $y_{2n}=(0,c_2,0,2c_4,0, $\\$\cdots,nc_{2n},0,0, \cdots)$ and note that $T(y_{2n})\rightarrow (0,c_2,0,c_4,0, \cdots)\in \overline{\mathcal{R}(T)}$.
Thus we have an operator acting on a Hilbert $C^*$-module, which does not have closed range but the closure of its range is orthogonally complemented.
\end{example}

We remind the following result of Fang et al. and observe that this result needs some revision.

{\bf \cite[Theorem 1.1]{FangYuYao}.}
\emph{Let $C\in \mathcal{L}(\mathscr{G},\mathscr{F})$, $A\in \mathcal{L}(\mathscr{E},\mathscr{F})$ and $\overline{\mathcal{R}(A^*)}$
be orthogonally complemented. The following statements are equivalent:
\begin{enumerate}
 \item $CC^*\leq \lambda AA^*$ for some $\lambda > 0;$
 \item There exists $\mu > 0$ such that $\| C^*z\|\leq \mu \| A^*z\|$ for any $z\in \mathscr{F}$;
 \item There exists $D\in \mathcal{L}(\mathscr{G},\mathscr{E})$ such that $AD=C$, i.e., $AX=C$ has a solution;
 \item $\mathcal{R}(C)\subseteq \mathcal{R}(A)$.
\end{enumerate}
In this case, there exists a unique operator $X$ satisfying $ \mathcal{R}(X)\subseteq \mathcal{N}(A)^{\perp}$, which is called
the reduced solution and is denoted by $D$ and defined as follows:
\begin{equation}\label{e21}
D=P_{A^*} A^{-1}C\,,\quad D^*\mid_{\mathcal{N}(A)}=0\,,\quad D^{*}A^{*}y=C^*y\quad (y\in \mathscr{F}),
\end{equation}
where $A^{-1}$ does not refer to the inverse of $A$ but to the expression of inverse image.}

Carefully checking the proof of \cite[Theorem 1.1]{FangYuYao} shows that there is a gap in it. Let $\mathscr{H}$ be a Hilbert $C^*$-module. Let $A\in \mathcal{L}(\mathscr{H})$ be given such that $\overline{\mathcal{R}(A^*)}$ is orthogonally complemented. The authors of \cite{FangYuYao} proved $(2)\Rightarrow (3)$ by introducing
$D$ as (\ref{e21}), where the notation $A^{-1}$ stands for the inverse image rather than the inverse of $A$. But if $ \mathcal{R}(C)$ is not contained in $\mathcal{R}(A)$, then $A^{-1}C$ is meaningless, which leads to the wrong way to well define $D$ as above. The same problem appears in the proof of $(2)\Rightarrow (4)$ in Theorem 1.1 of \cite{FangYuYao}. It can be deduced from the proof of Theorem 1.1 of \cite{FangYuYao} that definitely

\begin{itemize}
\item Item (4) $\Leftrightarrow$ Item (3);
\item Item (3)$\Rightarrow $ Item (1);
\item Item (1)$\Rightarrow$ Item (2).
\end{itemize}
The next example shows that (2) and (4) are not equivalent.

\begin{example}
Let $\mathscr{A}=C[0,1]$ and let $\mathscr{M}=\{f\in \mathscr{A}, f(0)=0\}$ be its maximal ideal regarded as a Hilbert $\mathscr{A}$-module. We define $A\in\mathcal{L}(\mathscr{M},\mathscr{A})$ and $C\in\mathcal{L}(\mathscr{A})$ by
\[
(Af)(\lambda)=\lambda f(\lambda)\qquad (f \in \mathscr{M})
\]
and
\[
(Cf)(\lambda)=\lambda f(\lambda) \qquad (f\in \mathscr{A}).
\]
We have $C^*=C$ and $(A^*f)(\lambda)=\lambda f(\lambda)\,\, (f \in \mathscr{A})$. Clearly $\|C^*(f)\|\leq \|A^*(f)\|$ but $\mathcal{R}(C)\not\subseteq\mathcal{R}(A)$, since for $f_0(\lambda)=\lambda$ we have $f_0\in \mathcal{R}(C)\setminus \mathcal{R}(A)$. Now we show that $\overline{\mathcal{R}(A^*)}$ is orthogonally complemented. It is enough to show that $\mathcal{R}(A^*)$ is dense in $\mathscr{M}$: Let $f\in\mathscr{M}$, and define $f_n\in\mathscr{A}$ by
\[f_n(\lambda)=\begin{cases}\frac{f(\lambda)}{\lambda}&\lambda\in[\frac{1}{n},1]\\
nf(\frac{1}{n})&\lambda\in [0,\frac{1}{n}]\end{cases}
\]
 Then
 \begin{align*}
 \|A^*f_n-f\|&=\sup_{\lambda\in[0,1]}\left|A^*f_n(\lambda)-f(\lambda)\right|\\
 &=\sup_{\lambda\in[0\frac{1}{n}]}\left|n\lambda f\left(\frac{1}{n}\right)-f(\lambda)\right|\\
 &\leq \sup_{\lambda\in[0\frac{1}{n}]} \left|n\lambda f\left(\frac{1}{n}\right)\right|+\sup_{\lambda\in[0\frac{1}{n}]} \left|f(\lambda)\right|\\
 &\leq \left|f\left(\frac{1}{n}\right)\right|+\left|f(\lambda_n)\right|\qquad (\text{~for some~} \lambda_n\in [0,1/n])
 \end{align*}
It follows from $\lim_nf\left(\frac{1}{n}\right)=\lim_nf(\lambda_n)=f(0)=0$ that $\|A^*f_n-f\|\to 0$ as $n \to \infty$ uniformly on $[0,1]$. Hence $A^*f_n \to f$ as $n \to \infty$ in $\mathscr{M}$.
 \end{example}

Now, we restate \cite[Theorem 1.1]{FangYuYao} in another form.\\

\begin{theorem}\label{t22}
 Let $C\in \mathcal{L}(\mathscr{G},\mathscr{F})$, $A\in \mathcal{L}(\mathscr{E},\mathscr{F})$. Suppose that $\overline{\mathcal{R}(A^*)}$
is orthogonally complemented. The following statements are equivalent:
\begin{enumerate}
 \item $\mathcal{R}(C)\subseteq \mathcal{R}(A)$;
 \item There exists $D\in \mathcal{L}(\mathscr{G},\mathscr{E})$ such that $AD=C$, i.e., $AX=C$ has a solution.
 \end{enumerate}
And if one of above statements holds, then
 $$CC^*\leq \lambda AA^* ~~for~ some~~ \lambda > 0.$$
In this case, there exists a unique operator $X$ satisfying $ \mathcal{R}(X)\subseteq \mathcal{N}(A)^{\perp}$, which is called the reduced
solution, denoted by $D$, and is defined as follows:
$$
D=P_{A^*} A^{-1}C\,,\quad D^*\mid_{\mathcal{N}(A)}=0\,,\quad D^{*}A^{*}y=C^*y\quad (y\in \mathscr{F}),
$$
where $A^{-1}$ does not refer to the inverse of $A$ but is the expression of inverse image.
\end{theorem}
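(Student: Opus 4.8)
The plan is to establish the equivalence $(1)\Leftrightarrow(2)$ together with the description of the reduced solution, and then to read off $CC^*\leq\lambda AA^*$ as a one-directional consequence. The implication $(2)\Rightarrow(1)$ is immediate, since $AD=C$ gives $\mathcal{R}(C)=\mathcal{R}(AD)\subseteq\mathcal{R}(A)$. For $(1)\Rightarrow(2)$ I would reconstruct the solution as in the valid part of the proof of \cite[Theorem 1.1]{FangYuYao}. Using that $\overline{\mathcal{R}(A^*)}$ is orthogonally complemented, write $\mathscr{E}=\overline{\mathcal{R}(A^*)}\oplus\mathcal{N}(A)$ with projection $P_{A^*}$, and recall $AP_{A^*}=A$ (the adjoint of $P_{A^*}A^*=A^*$). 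For each $g\in\mathscr{G}$ the hypothesis $\mathcal{R}(C)\subseteq\mathcal{R}(A)$ furnishes some $w_g\in\mathscr{E}$ with $Aw_g=Cg$, and I set $Dg:=P_{A^*}w_g$. Any two choices of $w_g$ differ by an element of $\mathcal{N}(A)=\overline{\mathcal{R}(A^*)}^{\perp}$, which $P_{A^*}$ annihilates, so $D$ is well defined; it is additive and $\mathscr{A}$-linear, satisfies $ADg=AP_{A^*}w_g=Aw_g=Cg$, and has $\mathcal{R}(D)\subseteq\overline{\mathcal{R}(A^*)}=\mathcal{N}(A)^{\perp}$.

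The first thing to verify is that $D$ is bounded. Since $\mathscr{G}$ and $\mathscr{E}$ are Banach spaces and $D$ is linear, I would apply the closed graph theorem. If $g_n\to g$ and $Dg_n\to h$, then $A(Dg_n)=Cg_n\to Cg$ while also $A(Dg_n)\to Ah$ by continuity of $A$, so $Ah=Cg$; and $h\in\overline{\mathcal{R}(A^*)}$ because that submodule is closed. As $\overline{\mathcal{R}(A^*)}\cap\mathcal{N}(A)=\{0\}$, the preimage of $Cg$ under $A$ lying in $\overline{\mathcal{R}(A^*)}$ is unique, forcing $h=Dg$. Hence the graph of $D$ is closed and $D$ is bounded.

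The genuinely delicate step, and the one I expect to be the main obstacle, is adjointability, because a bounded module map on a Hilbert $C^*$-module need not be adjointable. With $D$ bounded, pairing against $A^*y$ gives $\langle Dg,A^*y\rangle=\langle ADg,y\rangle=\langle Cg,y\rangle=\langle g,C^*y\rangle$, and the standard identity $\|h\|=\sup_{\|g\|\leq1}\|\langle g,h\rangle\|$ then yields $\|C^*y\|\leq\|D\|\,\|A^*y\|$ for all $y\in\mathscr{F}$. Because $\mathcal{R}(C)\subseteq\mathcal{R}(A)$ forces $\mathcal{N}(A^*)\subseteq\mathcal{N}(C^*)$, the rule $A^*y\mapsto C^*y$ is well defined and, by the estimate just obtained, bounded on $\mathcal{R}(A^*)$; I would extend it by continuity to $\overline{\mathcal{R}(A^*)}$ and set it equal to $0$ on $\mathcal{N}(A)$, producing a bounded module map $E\colon\mathscr{E}\to\mathscr{G}$. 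A density argument shows $\langle Dg,e\rangle=\langle g,Ee\rangle$ for every $e\in\mathscr{E}$ (both sides agree on the dense set $\mathcal{R}(A^*)$ and vanish on $\mathcal{N}(A)$, where $Dg$ lands orthogonally), so $D$ is adjointable with $D^*=E$, giving $D\in\mathcal{L}(\mathscr{G},\mathscr{E})$ and the formulas $D=P_{A^*}A^{-1}C$, $D^*\mid_{\mathcal{N}(A)}=0$ and $D^*A^*y=C^*y$.

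Finally I would record the inequality and uniqueness. From $C=AD$ we obtain $CC^*=ADD^*A^*\leq\|D\|^2AA^*$, so under $(1)$ or $(2)$ the bound $CC^*\leq\lambda AA^*$ holds with $\lambda=\|D\|^2$; I stress that only this direction is asserted, since the preceding example refutes the converse. For uniqueness, if $D_1,D_2$ both solve $AX=C$ with ranges in $\mathcal{N}(A)^{\perp}=\overline{\mathcal{R}(A^*)}$, then $A(D_1-D_2)=0$ puts $\mathcal{R}(D_1-D_2)$ inside $\overline{\mathcal{R}(A^*)}\cap\mathcal{N}(A)=\{0\}$, whence $D_1=D_2$. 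As a shortcut, the equivalence and the reduced-solution description are also the special case $B=I$ of Theorem \ref{t23}; I nonetheless prefer the direct construction above because it pinpoints where orthogonal complementability of $\overline{\mathcal{R}(A^*)}$ is used, which is exactly the hypothesis that rescues the corrected statement.
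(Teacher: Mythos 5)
Your proof is correct. The paper gives no explicit proof of this theorem --- it assembles the statement from the implications it has just extracted as the valid portion of the proof of Theorem 1.1 in \cite{FangYuYao} (namely $(4)\Leftrightarrow(3)$ and $(3)\Rightarrow(1)$ in that paper's numbering) --- and your argument, constructing $D=P_{A^*}A^{-1}C$ on the inverse image, proving boundedness by the closed graph theorem, and obtaining $D^*$ from the map $A^*y\mapsto C^*y$ extended by continuity on $\overline{\mathcal{R}(A^*)}$ and by zero on $\mathcal{N}(A)$, is precisely a self-contained reconstruction of that cited construction, so it follows essentially the same route.
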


This gap encourages us to find a solution for the operator equation $AX=C$ under other conditions. We would state the following theorem.

\begin{theorem}
Let $A, C\in \mathcal{L}(\mathscr{E})$. Suppose that $\overline{\mathcal{R}(A^*)}$ and $\overline{\mathcal{R}(C^*)}$
 are orthogonally complemented in $\mathscr{E}$. If $CC^*=\lambda AA^*$ for some $\lambda >0$, then $AX=C$ has a solution.
\end{theorem}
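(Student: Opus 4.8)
The plan is to bypass generalized inverses entirely and manufacture the solution $X$ by hand from a partial isometry built out of the equality $CC^*=\lambda AA^*$. First I would normalize: replacing $A$ by $\sqrt{\lambda}\,A$ reduces everything to the case $\lambda=1$, i.e. $CC^*=AA^*$, since $A(\sqrt{\lambda}\,X)=C$ is solvable iff $AX=C$ is. The crucial observation is that the operator equality $CC^*=AA^*$ is far stronger than the norm inequality in item (2) of the Fang et al.\ theorem (which the preceding example shows is insufficient): using the adjoint relation one gets, for all $x,y\in\mathscr{E}$, that $\langle C^*x,C^*y\rangle=\langle x,CC^*y\rangle=\langle x,AA^*y\rangle=\langle A^*x,A^*y\rangle$, so the full $\mathscr{A}$-valued inner products agree, not merely their norms.

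Next I would define $V\colon\mathcal{R}(A^*)\to\mathcal{R}(C^*)$ by $V(A^*x)=C^*x$. The identity above makes $V$ well defined (if $A^*x=A^*x'$ then $\langle C^*(x-x'),C^*(x-x')\rangle=\langle A^*(x-x'),A^*(x-x')\rangle=0$, so $C^*x=C^*x'$), $\mathscr{A}$-linear (because $A^*$ and $C^*$ are module maps), and inner-product preserving. Being isometric, $V$ extends by continuity to a map $\overline{\mathcal{R}(A^*)}\to\overline{\mathcal{R}(C^*)}$ that still preserves inner products; its image is complete, hence closed, and contains the dense set $\mathcal{R}(C^*)$, so $V$ is a unitary isomorphism of Hilbert $\mathscr{A}$-modules onto $\overline{\mathcal{R}(C^*)}$, with inverse $V^{-1}$ obeying $\langle Vu,w\rangle=\langle u,V^{-1}w\rangle$.

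Finally I would glue $V$ into an operator on all of $\mathscr{E}$, and this is where both complementability hypotheses are used: they supply the projections $P_{A^*}$ onto $\overline{\mathcal{R}(A^*)}$ and $P_{C^*}$ onto $\overline{\mathcal{R}(C^*)}$, and I would set $\widetilde V=V P_{A^*}$, with $V$ viewed as landing in $\mathscr{E}$. The hard part is adjointability: a bounded $\mathscr{A}$-linear map between Hilbert $C^*$-modules need not be adjointable, so I cannot simply assert $\widetilde V\in\mathcal{L}(\mathscr{E})$. Instead I would verify directly that $\widetilde V^*=V^{-1}P_{C^*}$ via $\langle\widetilde V x,y\rangle=\langle VP_{A^*}x,P_{C^*}y\rangle=\langle P_{A^*}x,V^{-1}P_{C^*}y\rangle=\langle x,V^{-1}P_{C^*}y\rangle$, using that $VP_{A^*}x\in\overline{\mathcal{R}(C^*)}$ and $V^{-1}P_{C^*}y\in\overline{\mathcal{R}(A^*)}$ together with the self-adjointness of the projections. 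Once $\widetilde V\in\mathcal{L}(\mathscr{E})$ is established, $P_{A^*}A^*=A^*$ gives $\widetilde V A^*=VA^*=C^*$, and taking adjoints yields $A\widetilde V^*=C$; thus $X=\sqrt{\lambda}\,\widetilde V^*$ solves $AX=C$. I expect the adjointability check to be the only genuinely delicate step, everything else being routine once the inner-product identity is noticed.
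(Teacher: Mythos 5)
Your proposal is correct and is essentially the paper's own proof: the paper defines $D(A^*z)=C^*z$ on $\mathcal{R}(A^*)$, extends it by continuity and by zero on $\mathcal{N}(A)$ (which is exactly your $\widetilde V=VP_{A^*}$), builds the reverse-direction map $\tilde{D}'$ with $\tilde{D}'C^*=A^*$ vanishing on $\mathcal{N}(C)$ (your $V^{-1}P_{C^*}$, up to the factor $\lambda$ that your normalization absorbs), and establishes adjointability by the same inner-product pairing $\langle \tilde{D}A^*x, C^*y\rangle=\langle A^*x,\lambda\tilde{D}'C^*y\rangle$ before taking adjoints to get $A\tilde{D}^*=C$. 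Your only cosmetic refinements are the reduction to $\lambda=1$ and the observation that the map is inner-product preserving (hence unitary onto $\overline{\mathcal{R}(C^*)}$), which packages the paper's two separate constructions as a map and its inverse.
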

\begin{proof}
Let $D: \mathcal{R}(A^*)\rightarrow \mathcal{R}(C^*)$ be defined by $D(A^*z)=C^*z\,\,(z\in \mathscr{E})$. By assumption, we have $CC^*=\lambda AA^*$, so
$$ \|C^*z\|= \lambda ^{\frac{1}{2}}\|A^*z\|\quad \text{for ~any ~}z\in \mathscr{E}.$$
Hence $D$ is a well-defined bounded linear operator. So we can extend $D$
to $\overline{\mathcal{R}(A^*)}$ which is denoted by the same $D$. Then
\begin{equation}
 \tilde{D}(y)=\left\{
\begin{array}{lr}
D(y) \qquad\qquad ~~~~~y\in \overline{\mathcal{R}(A^*)};\\
0 \qquad\qquad\qquad ~~~~~~~~~~~~y \in \mathcal{N}(A),\\
\end{array} \right.
\end{equation}
is well-defined and $\tilde{D}A^*=C^*$. Similarly, there is a bounded linear map $\tilde{D^\prime}$ vanishing on $\mathcal{N}(C)$ such that $\tilde{D^\prime}C^*=A^*$. It follows from
$$\langle \tilde{D}A^*x, C^*y\rangle=\langle C^*x,C^*y\rangle=\langle x, CC^*y\rangle=\langle x, \lambda AA^*y\rangle=\langle A^*x,\lambda \tilde{D^\prime}C^*y\rangle $$
that $\tilde{D}$ is adjointable and $\tilde{D}^*=\lambda\tilde{D^\prime}$. Hence $A\tilde{D}^*=C$.
\end{proof}

By using the above theorem we deduce the following result.
\begin{corollary}\label{c31}
Let $A, C\in \mathcal{L}(\mathscr{E})$. Suppose that $\overline{\mathcal{R}(A^*)}$ and $\overline{\mathcal{R}(C^*)}$
 are orthogonally complemented in $\mathscr{E}$. If $AA^*=\lambda CC^*$ for some $\lambda >0$, then $\mathcal{R}(A)=\mathcal{R}(C)$.
\end{corollary}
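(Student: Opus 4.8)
The plan is to derive the two inclusions $\mathcal{R}(C)\subseteq\mathcal{R}(A)$ and $\mathcal{R}(A)\subseteq\mathcal{R}(C)$ separately, each by a single application of the theorem immediately preceding this corollary, taking advantage of the fact that the hypothesis $AA^*=\lambda CC^*$ is symmetric under interchanging $A$ and $C$.

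For the first inclusion I would rewrite the hypothesis as $CC^*=\lambda^{-1}AA^*$, where $\lambda^{-1}>0$. Since $\overline{\mathcal{R}(A^*)}$ and $\overline{\mathcal{R}(C^*)}$ are both orthogonally complemented, the preceding theorem applies verbatim and produces a solution $X$ of $AX=C$. Then $C=AX$ gives $\mathcal{R}(C)=\mathcal{R}(AX)\subseteq\mathcal{R}(A)$.

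For the reverse inclusion I would apply the same theorem with the roles of $A$ and $C$ exchanged. The hypothesis is now used in the form $AA^*=\lambda CC^*$, which is exactly the equality required by the theorem (with $C$ playing the part of $A$ and $A$ playing the part of $C$), and the orthogonal-complementedness assumption is unaffected by the swap. The theorem then yields a solution $X'$ of $CX'=A$, whence $\mathcal{R}(A)=\mathcal{R}(CX')\subseteq\mathcal{R}(C)$. Combining the two inclusions gives $\mathcal{R}(A)=\mathcal{R}(C)$.

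Because each half of the argument is a direct invocation of the already-proved theorem, I do not expect a substantive obstacle; the only thing to check carefully is the bookkeeping of which operator plays which role in each application, together with the elementary observation that a genuine solution of $AX=C$ (respectively $CX'=A$) immediately forces the corresponding range containment.
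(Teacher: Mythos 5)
Your proposal is correct and matches the paper's intent: the paper proves this corollary simply by citing the theorem immediately preceding it, and the two symmetric applications you spell out (giving $AX=C$ and $CX'=A$, hence both range inclusions) are exactly the argument being invoked.
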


\begin{corollary}\label{c32}
Let $T\in \mathcal{L}(\mathscr{E})$. Suppose that $\overline{\mathcal{R}(T^*)}$ and $\overline{\mathcal{R}(|T^*|)}$
 are orthogonally complemented in $\mathscr{E}$. Then $\mathcal{R}(T)=\mathcal{R}(|T^*|)$.
\end{corollary}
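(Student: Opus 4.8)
The plan is to derive this as an immediate consequence of Corollary~\ref{c31} by making the right substitution. First I would record the basic identity defining the modulus: since $|S|=(S^*S)^{1/2}$ for any $S\in\mathcal{L}(\mathscr{E})$, applying this to $S=T^*$ gives $|T^*|=(TT^*)^{1/2}$, which is a positive, hence self-adjoint, operator. The single algebraic fact I need is obtained by squaring this relation, namely $|T^*|^2=TT^*$.

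With this in hand, I would apply Corollary~\ref{c31} with the choices $A=T$ and $C=|T^*|$. To invoke the corollary I must check its three hypotheses. First, $\overline{\mathcal{R}(A^*)}=\overline{\mathcal{R}(T^*)}$ is orthogonally complemented by assumption. Second, because $|T^*|$ is self-adjoint we have $C^*=C=|T^*|$, so $\overline{\mathcal{R}(C^*)}=\overline{\mathcal{R}(|T^*|)}$, which is orthogonally complemented by assumption. Third, the required proportionality $AA^*=\lambda CC^*$ holds with $\lambda=1$, since
\[
CC^*=|T^*|\,|T^*|^*=|T^*|^2=TT^*=AA^*.
\]

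Corollary~\ref{c31} then yields $\mathcal{R}(A)=\mathcal{R}(C)$, that is $\mathcal{R}(T)=\mathcal{R}(|T^*|)$, which is exactly the claim. There is essentially no serious obstacle here: the statement is a specialization of Corollary~\ref{c31} in which the proportionality constant is forced to equal $1$. The only point requiring any care is the bookkeeping that the modulus $|T^*|$ is formed from $T^*$ rather than from $T$, so that its square returns $TT^*$ (and not $T^*T$), together with the observation that its self-adjointness makes the second range-hypothesis of Corollary~\ref{c31} reduce precisely to the hypothesis assumed in the present statement.
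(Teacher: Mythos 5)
Your proof is correct and follows exactly the paper's own route: the paper likewise disposes of this corollary by setting $A=T$, $C=|T^*|$, and $\lambda=1$ in Corollary~\ref{c31}. Your explicit verification of the hypotheses (self-adjointness of $|T^*|$ and the identity $|T^*|^2=TT^*$) is just the bookkeeping the paper leaves implicit.
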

\begin{proof}
It is sufficient to put $A=T$, $C=|T^*|$ and $\lambda =1$ in Corollary \ref{c31}.
\end{proof}

\section{ Solutions to $AX+YB=C$ and $AXA^*+BYB^*=C$}

 The matrix equations $AX-YB=C$ and $AXB-CYD=E$ are studied by Baksalary and Kala in \cite{BaksalaryKala1} and \cite{BaksalaryKala2}, respectively,
 in which generalized inverses have been used to obtain some necessary and sufficient conditions for the existence of a solution.
 It is well-known that the matrix equation $AX+YB=C$ has a solution if and only if
\begin{equation}\label{Q1}
(I-AA^{\dag})C(I-B^{\dag}B)=0\,,
\end{equation}
and the general solution is of the form
$$
X=A^{\dag}C+A^{\dag}ZB+(I-AA^{\dag})W, \quad Y=(I-AA^{\dag})CB^{\dag}+(I-AA^{\dag})ZBB^{\dag}-Z\,,
$$
where $Z$ and $W$ are arbitrary matrices \cite{BaksalaryKala1}.

In the following, we shall investigate the solution of above equations with adjointable operators in the setting of Hilbert $C^*$-modules when neither $\mathcal{R}(A)$ nor $\mathcal{R}(B)$ is necessarily closed. First, we consider the equation $AX+YB=0$.
\begin{theorem}\label{t31}
Let $A, B\in \mathcal{L}(\mathscr{E})$. Suppose that $\overline{\mathcal{R}(A^*)}$, $\overline{\mathcal{R}(A)}$, $\overline{\mathcal{R}(B^*)}$ and $\overline{\mathcal{R}(B)}$ are orthogonally
complemented. Then the operators
\begin{eqnarray}\label{xhyh}
X_{h}=N_{A}W_{1}+W_2P_{B^*},\quad Y_{h}=P_AW_3+W_{4}N_{B^*},
\end{eqnarray}
satisfy the equation $AX+YB=0$, where $W_{1},W_{2}$, $W_3$ and $W_{4}$ are arbitrary operators in $\mathcal{\mathcal{L}}(\mathscr{E})$ such that
$AW_2P_{B^*}+P_AW_3B=0$. Moreover, any solution of $AX+YB=0$ is of the form \eqref{xhyh}.
\end{theorem}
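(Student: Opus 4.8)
The plan is to establish the two implications separately, relying throughout on the projection identities supplied by the four orthogonal complementation hypotheses. Writing $N_A=I-P_{A^*}$ and $N_{A^*}=I-P_A$, and likewise $N_B=I-P_{B^*}$, $N_{B^*}=I-P_B$, the setup recalled before Theorem~\ref{t23} gives
$$A N_A=0,\qquad N_{A^*}A=0,\qquad B N_B=0,\qquad N_{B^*}B=0,$$
whence also $P_A A=A$ (as $P_A=I-N_{A^*}$) and $B P_{B^*}=B$. These simply record that $N_A,N_B$ project onto $\mathcal{N}(A),\mathcal{N}(B)$ while $P_A$ fixes $\overline{\mathcal{R}(A)}\supseteq\mathcal{R}(A)$, and symmetrically for the starred projections.

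For the sufficiency part I would substitute $X_h,Y_h$ into the left-hand side and expand:
$$A X_h+Y_h B = A N_A W_1 + A W_2 P_{B^*} + P_A W_3 B + W_4 N_{B^*} B.$$
Since $A N_A=0$ and $N_{B^*}B=0$, the outer terms drop out and what remains is $A W_2 P_{B^*}+P_A W_3 B$, which vanishes by the imposed compatibility condition on the $W_i$. This direction is a routine computation.

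The substance is the converse. Given a solution $(X,Y)$ of $AX+YB=0$, the idea is to split the equation by multiplying it on the right by $N_B$ and on the left by $N_{A^*}$. Right multiplication together with $BN_B=0$ gives $AXN_B=0$, so $\mathcal{R}(XN_B)\subseteq\mathcal{N}(A)$ and hence $XN_B=N_A(XN_B)$; then $X=XP_{B^*}+XN_B$ exhibits $X$ in the form $N_AW_1+W_2P_{B^*}$ with $W_1=XN_B$ and $W_2=X$. Symmetrically, left multiplication together with $N_{A^*}A=0$ yields $N_{A^*}YB=0$; as $N_{A^*}Y$ is bounded and vanishes on $\mathcal{R}(B)$, it vanishes on $\overline{\mathcal{R}(B)}$, so $N_{A^*}YP_B=0$ and therefore $N_{A^*}Y=(N_{A^*}Y)N_{B^*}$. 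Decomposing $Y=P_AY+N_{A^*}Y$ then gives $Y=P_AW_3+W_4N_{B^*}$ with $W_3=Y$ and $W_4=N_{A^*}Y$.

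It remains to verify that these particular operators satisfy the compatibility condition $AW_2P_{B^*}+P_AW_3B=0$. Here I would use $AXN_B=0$ to rewrite $AXP_{B^*}=AX(I-N_B)=AX$, and then use $YB=-AX$ with $P_AA=A$ to get $P_AYB=-P_AAX=-AX$; adding the two produces $0$, as required. The only genuinely delicate point is the passage from $N_{A^*}YB=0$ to $N_{A^*}YP_B=0$, which relies on the continuity of the adjointable operator $N_{A^*}Y$ and the density of $\mathcal{R}(B)$ in $\overline{\mathcal{R}(B)}$; every other step is bookkeeping with the four projections.
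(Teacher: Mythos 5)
Your proof is correct, and its skeleton coincides with the paper's: both arguments hinge on multiplying $AX+YB=0$ on the right by $N_B$ to get $AXN_B=0$ and on the left by $N_{A^*}$ to get $N_{A^*}YB=0$. Where you diverge is in how the form \eqref{xhyh} is then extracted. The paper treats $AXN_B=0$ and $N_{A^*}YB=0$ as instances of the equation $AXB=C$ with $C=0$ and invokes Theorem~\ref{t23}, which forces it through the bookkeeping identifications $N_{N_B}=I-P_{N_B}=P_{B^*}$ and $N_{N_{A^*}}=P_A$; you instead argue by hand, noting that $AXN_B=0$ gives $\mathcal{R}(XN_B)\subseteq\mathcal{N}(A)$, hence $XN_B=N_A(XN_B)$, and that $N_{A^*}Y$ annihilates $\mathcal{R}(B)$ and hence, by continuity, $\overline{\mathcal{R}(B)}$, giving $N_{A^*}Y=(N_{A^*}Y)N_{B^*}$. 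This buys two things: explicit witnesses $W_1=XN_B$, $W_2=X$, $W_3=Y$, $W_4=N_{A^*}Y$ rather than the unspecified operators produced by Theorem~\ref{t23}, and a self-contained argument that does not lean on the reduced-solution machinery. You also explicitly check the side condition $AW_2P_{B^*}+P_AW_3B=0$ for your witnesses (via $AXP_{B^*}=AX$ and $P_AYB=-AX$), a point the paper's proof never addresses even though the constraint is part of the statement of form \eqref{xhyh}; in the paper it is only implicitly recoverable by substituting the obtained decompositions back into $AX+YB=0$ and cancelling the terms $AN_AW_1$ and $W_4N_{B^*}B$. So your write-up is, if anything, slightly more complete on the converse direction.
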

\begin{proof}
Since
\begin{align*}
A(N_{A}W_1+W_2P_{B^*})&+(P_AW_3+W_4N_{B^*})B\\
&=AN_{A}W_1+AW_2P_{B^*}+P_AW_3B+W_4N_{B^*}B\\
&=0,
\end{align*}
so $X_h$ and $Y_h$ satisfy Equation \eqref{xhyh}.
Now we assume that there exist $X$ and $Y$ such that $AX+YB=0$. So we have $AXN_B=0$. By Theorem \ref{t23}, there exist $W_1$ and $W_2$ such that $X=N_{A}W_{1}+W_2N_{N_B}$. Note that $P_{N_B}$ is the projection on $\overline{\mathcal{R}(N_B)}$ and $\overline{\mathcal{R}(N_B)}=\overline{\mathcal{R}(I-P_{B^*})}=\overline{\mathcal{R}(B^*)}^\perp$, so
 $N_{N_B}=I-P_{N_B}=P_{B^*}$. Thus $X=N_AW_1+W_2P_{B^*}$.\\
 Similarly, $N_{A^*}YB=0$ has a solution. Hence, there exist $W_3$ and $W_4$ such that $Y=N_{N_{A^*}}W_3+W_{4}N_{B^*}=P_AW_3+W_4N_{B^*}$.
\end{proof}
\begin{remark}\label{r31}
Note that it is easy to find a suitable form for $W_2$ and $W_3$. For example, if we put $W_2=-P_{A^*}W'B$ and $W_3=AW'P_B$, where $W'$ is an arbitrary
operator, then we have $AW_2P_{B^*}+P_AW_3B=-AP_{A^*}W'BP_{B^*}+P_AAW'P_BB=-AW'B+AW'B=0$.
\end{remark}

We are ready to state our first main result regarding the following equation
 \begin{equation}\label{e31}
 AX+YB=C.
 \end{equation}

\begin{theorem}\label{t32}
Let $A, B, C\in \mathcal{L}(\mathscr{E})$. Suppose $\overline{\mathcal{R}(A)}$, $\overline{\mathcal{R}(B^*)}$, $\overline{\mathcal{R}(A^*)}$ and $\overline{\mathcal{R}(B)}$ are orthogonally complemented. If
$\mathcal{R}(CN_B)\subseteq \mathcal{R}(A)$ and $\mathcal{R}(P_{B^*}C^*)\subseteq \mathcal{R}(B^*)$, then Equation \eqref{e31} has the solution
$X=X_{h}+X_{p}$ and $Y=Y_{h}+Y_{p}$, where $X_{h}$ and $Y_{h}$ are defined in \eqref{xhyh}, and $X_p$ and $Y_p$ are solutions of $AX=P_{A}CN_{B}$
and $B^{*}Y^{*}=P_{B^{*}}C^{*}$, respectively.\\
 Furthermore, if Equation \eqref{e31} has a solution, then the solution is of the form $X$ and $Y$ above.
\end{theorem}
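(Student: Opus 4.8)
The plan is to establish the theorem in two directions: first verify that the proposed $X = X_h + X_p$ and $Y = Y_h + Y_p$ actually solve Equation \eqref{e31}, and then prove that every solution has this form. The key structural observation is that the homogeneous part (Theorem \ref{t31}) and a particular solution must combine to solve the inhomogeneous equation, so I would begin by checking that the particular solutions $X_p, Y_p$ exist and that $AX_p + Y_pB = C$.

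**For existence of the particular solutions,** I would invoke Theorem \ref{t22} (the revised Fang et al.\ result). The hypothesis $\mathcal{R}(CN_B) \subseteq \mathcal{R}(A)$ together with the orthogonal complementedness of $\overline{\mathcal{R}(A^*)}$ guarantees that $AX = P_A C N_B$ has a solution $X_p$; similarly $\mathcal{R}(P_{B^*}C^*) \subseteq \mathcal{R}(B^*)$ gives a solution $Y^*_p$ of $B^*Y^* = P_{B^*}C^*$, i.e.\ $Y_p B = C P_{B^*}$ after taking adjoints (using $B^* = P_{B^*}B^*$ and $(P_{B^*}C^*)^* = CP_{B^*}$). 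I would then verify the forward direction by the decomposition $C = P_A C + N_{A^*}C$ and $C = CP_{B^*} + CN_B$ (here I must be careful about which projection acts, since $P_A$ projects onto $\overline{\mathcal{R}(A)}$ while $N_{A^*} = I - P_{A^*}$). The plan is to show $AX_p = P_A C N_B$ and $Y_p B = CP_{B^*}$, so that adding these and using $P_A C N_B + C P_{B^*} = C$ (which needs $N_{A^*}CN_B = 0$, exactly the analogue of the matrix solvability condition \eqref{Q1}) yields $AX_p + Y_p B = C$; the homogeneous part contributes zero by Theorem \ref{t31}.

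**For the converse,** I would suppose $AX + YB = C$ and subtract the particular solution to reduce to the homogeneous equation: setting $X' = X - X_p$ and $Y' = Y - Y_p$ gives $AX' + Y'B = 0$, whence Theorem \ref{t31} forces $X', Y'$ into the form \eqref{xhyh}. This immediately yields the claimed representation $X = X_h + X_p$, $Y = Y_h + Y_p$.

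**The main obstacle** I anticipate is the forward direction's compatibility condition: in the Hilbert space (matrix) setting one uses $(I - AA^\dagger)C(I - B^\dagger B) = 0$, but here the ranges are not closed, so I must argue that the hypotheses $\mathcal{R}(CN_B) \subseteq \mathcal{R}(A)$ and $\mathcal{R}(P_{B^*}C^*) \subseteq \mathcal{R}(B^*)$ correctly replace that identity. Specifically, showing that the two particular pieces $P_A C N_B$ and $CP_{B^*}$ reassemble into $C$ requires handling the projections $P_A, N_{A^*}, P_{B^*}, N_B$ carefully, since they need not commute with $C$; the delicate point is verifying that $\mathcal{R}(CN_B) \subseteq \mathcal{R}(A) \subseteq \overline{\mathcal{R}(A)}$ lets us write $CN_B = P_A C N_B$, and dually $CP_{B^*} = CP_{B^*}$ absorbs correctly, so that no ``cross term'' $N_{A^*}CN_B$ survives. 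Establishing this clean cancellation without closed-range assumptions is the crux of the argument.
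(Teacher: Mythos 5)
Your proof is correct. The forward direction is essentially identical to the paper's: both derive $P_ACN_B+CP_{B^*}=C$ from $N_{A^*}CN_B=0$ (which the hypothesis $\mathcal{R}(CN_B)\subseteq\mathcal{R}(A)$ gives, since then $CN_B=P_ACN_B$), obtain $X_p$ and $Y_p$ from Theorem \ref{t22}, and add the homogeneous part from Theorem \ref{t31}. For the converse, however, you take a genuinely different route. The paper never subtracts a particular solution: it multiplies $AX+YB=C$ on the right by $N_B$ (resp.\ on the left by $N_{A^*}$) to obtain $AXN_B=CN_B$ and $B^*Y^*N_{A^*}=C^*N_{A^*}$, then applies Theorem \ref{t23} to each of these equations, verifying its range hypotheses and using $N_{N_B}=P_{B^*}$, $N_{N_{A^*}}=P_A$ to read off the general forms $X=D+N_AV_1+V_2P_{B^*}$ and $Y=D'+P_AV_1+V_2N_{B^*}$ directly. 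Your argument --- $(X-X_p,\,Y-Y_p)$ solves $AX+YB=0$, so the ``moreover'' clause of Theorem \ref{t31} puts it in the form \eqref{xhyh} --- is the standard particular-plus-homogeneous reduction; it is shorter, reuses Theorem \ref{t31} (whose proof already did the Theorem \ref{t23} work once, in the homogeneous case), and sidesteps a point the paper leaves implicit, namely that the reduced solution $D$ of $AXN_B=CN_B$ really is a solution of $AX=P_ACN_B$ (this needs $DN_B=D$, which follows from the range condition $\mathcal{R}(D^*)\subseteq\mathcal{N}(N_B)^\perp=\mathcal{N}(B)=\mathcal{R}(N_B)$). What the paper's longer route buys is that its particular solutions are identified as reduced solutions with explicit range constraints, whereas yours represents the solution relative to an arbitrary fixed particular pair --- which is all the statement actually requires. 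One notational slip to fix: you wrote $N_{A^*}=I-P_{A^*}$, but under the paper's convention $N_T=I-P_{T^*}$ one has $N_{A^*}=I-P_A$; this is what your decomposition $C=P_AC+N_{A^*}C$ actually uses, so nothing downstream is affected.
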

\begin{proof}
    By the assumption, we have $\mathcal{R}(CN_B)\subseteq \mathcal{R}(A)$. Therefore, $P_ACN_B=CN_B$. So that $(I-P_A)CN_B=0$. Thus $N_{A^*}CN_{B}=0$, which means that
    $(I-P_{A})C(I-P_{B^*})=0.$
    It follows that
     \begin{equation}\label{e32}
     P_{A}CN_B+CP_{B^*}=C.
     \end{equation}
On the other hand, since $\overline{\mathcal{R}(A)}$ is orthogonally complemented and $P_A$ is the projection onto $\overline{\mathcal{R}(A)}$, so $\mathcal{R}(P_ACN_B)\subseteq \mathcal{R}(A)$. Hence, due to Theorem \ref{t22}, the equation $AX=P_ACN_B$ has a reduced solution. Similarly, since $\overline{\mathcal{R}(B)}$ is orthogonally complemented and $\mathcal{R}(P_{B^*}C^*)\subseteq \mathcal{R}(B^*)$, according to Theorem \ref{t22},
the equation $B^*Y^*=P_{B^*}C^*$ has a reduced solution. In fact, Theorem~\ref{t22} gives the following solutions
\[
X_{p}=P_{A^{*}} A^{-1}P_{A}CN_{B} \qquad\text{and}\qquad Y_{p}
^{*}=P_{B} (B^{*})^{-1}P_{B^{*}}C^{*}.
\] for $AX=P_{A}CN_{B}$
and $B^{*}Y^{*}=P_{B^{*}}C^{*}$, respectively. From (\ref{e32}) we deduce that
\[
AX_p+Y_pB=P_{A}CN_{B}+CP_{B^{*}}=C.
\]
Therefore, the general solution of Equation \eqref{e31} is $X=X_p+X_h$ and $Y=Y_p+Y_h$. Note that, according to
Theorem \ref{t31}, $X_h$ and $Y_h$ satisfies $AX+YB=0$ with suitable $W_2$ and $W_3$ mentioned in Remark \ref{r31}.\\
Now, assume that there exist $X$ and $Y$ such that $AX+YB=C$, so $AXN_B=CN_B$.
Since $\mathcal{R}(CN_B)\subseteq \mathcal{R}(A)$,
$\overline{\mathcal{R}(N_BC^*)}\subseteq \overline{\mathcal{R}(N_B)}=\mathcal{R}(N_B)$ (since $N_B=I-P_{B^*}$ is a projection)
and $\overline{\mathcal{R}(A^*)}$ and $\overline{\mathcal{R}(N_B)}$ are orthogonally complemented,
the conditions of Theorem \ref{t23} are covered. Thus the reduced solution of $AXN_B=CN_B$ is $D$ such that
 $\mathcal{R}(D)\subseteq \mathcal{N}(A)^\perp$ and $\mathcal{R}(D^*)\subseteq \mathcal{N}(N_B)^\perp=\mathcal{N}(B)$, which is denoted by $X_p$ and the general solution is
\begin{eqnarray*}
X=D+N_AV_1+V_2N_{N_B}=D+N_AV_1+V_2P_{B^*}=X_p+X_h.
\end{eqnarray*}
Note that $P_{N_B}$ is the projection on $\overline{\mathcal{R}(N_B)}$ and $\overline{\mathcal{R}(N_B)}=\overline{\mathcal{R}(I-P_{B^*})}=\overline{\mathcal{R}(B^*)}^\perp$, so
 $N_{N_B}=I-P_{N_B}=P_{B^*}$ (thus $P_{N_B}=I-N_{N_B}=N_B$).\\
Again, if there exist $X$ and $Y$ such that $AX+YB=C$, so $N_{A^*}YB=N_{A^*}C$ or $B^*Y^*N_{A^*}=C^*N_{A^*}$.
Since $\overline{\mathcal{R}(N_{A^*}C)}\subseteq \overline{\mathcal{R}(N_{A^*})}=\mathcal{R}(N_{A^*})$, $\mathcal{R}(C^*N_{A^*})\subseteq \mathcal{R}(B^*)$,
and $\overline{\mathcal{R}(N_{A^*})}$ and $\overline{\mathcal{R}(B)}$ are orthogonally complemented,
the suppositions of Theorem \ref{t23} hold. Hence, $N_{A^*}YB=N_{A^*}C$ has a reduced solution $D'$
 such that
 $\mathcal{R}(D')\subseteq \mathcal{N}(N_{A^*})^\perp=\mathcal{N}(A^*)$ and $\mathcal{R}(D'^*)\subseteq \mathcal{N}(B^*)^\perp$, which is denoted by $Y_p$ and the general solution is
\begin{eqnarray*}
Y=D'+N_{N_{A^*}}V_1+V_2N_{B^*}=D'+P_AV_1+V_2N_{B^*}=Y_p+Y_h.
\end{eqnarray*}
\end{proof}
\begin{remark}
If $A$ and $B$ are matrices, then $\mathcal{R}(A)$ and $\mathcal{R}(B^*)$ are closed and there exist the Moore-Penrose inverses of $A$ and $B^*$.
 In this case, if Equation \eqref{e31} has a solution, then
$$
N_{A^*}CN_{B}=N_{A^*}(AX+YB)N_{B}=0.
$$
Note that this condition is equivalent to \eqref{Q1}, i.e. $(I-AA^{\dag})C(I-B^{\dag}B)=0$, which is consistent with \cite[Theorem 1]{BaksalaryKala1}, by putting projections $P_{A}=AA^{\dag}$ and $P_{B^*}=B^{\dag}B$.
\end{remark}
Now we consider the equation $AX+BY=C$. Fillmore and Williams proved an interesting result \cite[Corollary 1]{FillmoreWilliams} by employing the well-known Douglas theorem. The next theorem is a slight generalization of \cite[Corollary 1]{FillmoreWilliams} when the considered ranges are not closed but their norm-completions are orthogonally complemented.

\begin{theorem}\label{t15}
 Let $A, B, C\in \mathcal{L}(\mathscr{E})$. Suppose that $\overline{\mathcal{R}(A^*)}$ and $\overline{\mathcal{R}(B^*)}$
 are orthogonally complemented in $\mathscr{E}$. If $A^*B=0$, then the following statements are equivalent:
\begin{enumerate}
 \item $\mathcal{R}(C)\subseteq \mathcal{R}(A)+\mathcal{R}(B);$
\item There exist $X,Y\in \mathcal{L}(\mathscr{E})$ such that $AX+BY=C$, i.e., $AX+BY=C$ has a solution.
 \end{enumerate}
and if one of above statements holds, then
 $$CC^*\leq \lambda (AA^*+BB^*)~~ for~ some~ \lambda> 0.$$
\end{theorem}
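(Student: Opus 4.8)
The plan is to collapse the two-variable problem into the single-operator framework of Theorem~\ref{t22} by packaging $A$ and $B$ into one row operator. Working in the direct-sum module $\mathscr{E}\oplus\mathscr{E}$, I would set $T\in\mathcal{L}(\mathscr{E}\oplus\mathscr{E},\mathscr{E})$, $T(x,y)=Ax+By$, whose adjoint is $T^*z=(A^*z,B^*z)$. Then $\mathcal{R}(T)=\mathcal{R}(A)+\mathcal{R}(B)$ and $TT^*=AA^*+BB^*$, and a pair $(X,Y)$ solves $AX+BY=C$ precisely when $Z=\binom{X}{Y}$ solves $TZ=C$. Hence statement (1) becomes $\mathcal{R}(C)\subseteq\mathcal{R}(T)$, statement (2) becomes solvability of $TZ=C$, and the asserted inequality becomes $CC^*\le\lambda\,TT^*$. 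All three conclusions are exactly what Theorem~\ref{t22} produces for the pair $(T,C)$, \emph{provided} its hypothesis holds, namely that $\overline{\mathcal{R}(T^*)}$ is orthogonally complemented in $\mathscr{E}\oplus\mathscr{E}$. Checking this is the entire substance of the proof, and the only place where $A^*B=0$ enters.

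For that, I would first record the trivial inclusion $\overline{\mathcal{R}(T^*)}\subseteq\overline{\mathcal{R}(A^*)}\times\overline{\mathcal{R}(B^*)}$, immediate from $T^*z=(A^*z,B^*z)$. For the reverse inclusion I would use $A^*B=0$ together with its adjoint $B^*A=0$: evaluating $T^*$ on $\mathcal{R}(A)$ and on $\mathcal{R}(B)$ gives
\[
T^*(Aw)=(A^*Aw,\,0),\qquad T^*(Bw)=(0,\,B^*Bw),
\]
so $\mathcal{R}(T^*)$ contains $\mathcal{R}(A^*A)\times\{0\}$ and $\{0\}\times\mathcal{R}(B^*B)$, and therefore $\overline{\mathcal{R}(T^*)}\supseteq\overline{\mathcal{R}(A^*A)}\times\overline{\mathcal{R}(B^*B)}$.

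The bridge between the two inclusions is the identity $\overline{\mathcal{R}(A^*A)}=\overline{\mathcal{R}(A^*)}$ (and likewise for $B$), which I would prove by a resolvent argument requiring no closed-range or complementation hypothesis. For $A^*z\in\mathcal{R}(A^*)$ and $\varepsilon>0$ one has, using the intertwining $(A^*A+\varepsilon)^{-1}A^*=A^*(AA^*+\varepsilon)^{-1}$,
\[
A^*z-(A^*A)(A^*A+\varepsilon)^{-1}A^*z=\varepsilon(A^*A+\varepsilon)^{-1}A^*z=\varepsilon A^*(AA^*+\varepsilon)^{-1}z,
\]
and the right-hand coefficient is small in norm because
\[
\bigl\|\varepsilon A^*(AA^*+\varepsilon)^{-1}\bigr\|^2=\bigl\|\varepsilon^2(AA^*)(AA^*+\varepsilon)^{-2}\bigr\|\le\sup_{t\ge0}\frac{\varepsilon^2 t}{(t+\varepsilon)^2}=\frac{\varepsilon}{4}\longrightarrow 0 .
\]
Thus $A^*z\in\overline{\mathcal{R}(A^*A)}$, and combining the displays yields $\overline{\mathcal{R}(T^*)}=\overline{\mathcal{R}(A^*)}\times\overline{\mathcal{R}(B^*)}$. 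Since $\overline{\mathcal{R}(A^*)}$ and $\overline{\mathcal{R}(B^*)}$ are orthogonally complemented by hypothesis, this product is orthogonally complemented in $\mathscr{E}\oplus\mathscr{E}$ with complementing projection the block-diagonal $\mathrm{diag}(P_{A^*},P_{B^*})$. This verifies the hypothesis of Theorem~\ref{t22} for $(T,C)$, which then delivers both the equivalence $(1)\Leftrightarrow(2)$ and the inequality $CC^*\le\lambda\,TT^*=\lambda(AA^*+BB^*)$.

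The main obstacle is exactly this complementation step. A closed submodule of an orthogonally complemented submodule need not itself be complemented, so it is essential to prove the \emph{exact} equality $\overline{\mathcal{R}(T^*)}=\overline{\mathcal{R}(A^*)}\times\overline{\mathcal{R}(B^*)}$ and not merely the containment in it; and this is where $A^*B=0$ is indispensable, since it is precisely what kills the off-diagonal coordinates in $T^*(Aw)$ and $T^*(Bw)$ and lets $\mathcal{R}(T^*)$ fill the full product up to closure.
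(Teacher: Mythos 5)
Your proof is correct and follows essentially the same route as the paper's: both reduce the problem to Theorem \ref{t22} by assembling $A$ and $B$ into a single operator $T$ on $\mathscr{E}\oplus\mathscr{E}$ and use $A^*B=0$ to identify $\overline{\mathcal{R}(T^*)}$ with $\overline{\mathcal{R}(A^*)}\oplus\overline{\mathcal{R}(B^*)}$, which is orthogonally complemented by hypothesis. The only differences are minor: you take $T$ to be the row operator $(x,y)\mapsto Ax+By$ rather than the paper's $2\times 2$ block matrix with zero second row (which forces the paper to pad $C$ into a block operator $S$ and observe that the reduced solution has vanishing off-column entries), and you reprove the identity $\overline{\mathcal{R}(A^*A)}=\overline{\mathcal{R}(A^*)}$ by a self-contained resolvent/functional-calculus argument where the paper simply cites \cite[Proposition 3.7]{Lance}.
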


\begin{proof}
$(1) \Rightarrow (2)$\\
Let $T=\left[
 \begin{array}{cc}
 A & B \\
 0 & 0 \\
 \end{array}
 \right]$,
$S= \left[
 \begin{array}{cc}
 C & 0 \\
 0 & 0 \\
 \end{array}
 \right]$ and
 $\hat{X}= \left[
 \begin{array}{cc}
 X & W \\
 Y & Z \\
 \end{array}
 \right]$
be operators on $\mathscr{E}\oplus \mathscr{E}$. Since $A^*B=0$, we have $T^*T=\left [
 \begin{array}{cc}
 A^*A & 0\\
 0 & B^*B \\
 \end{array}
 \right]$. Hence $\overline{\mathcal{R}(T^*T)}=\overline{\mathcal{R}(A^*A)} \oplus \overline{\mathcal{R}(B^*B)}$. By \cite[Proposition 3.7]{Lance}, $\overline{\mathcal{R}(S^*S)}=\overline{\mathcal{R}(S^*)}$ for any $S\in \mathcal{L}(\mathscr{E})$. We therefore have
 \begin{equation}\label{3.5}
 \overline{\mathcal{R}(T^*)}=\overline{\mathcal{R}(A^*)} \oplus \overline{\mathcal{R}(B^*)}.
 \end{equation}
 Since $\overline{\mathcal{R}(A^*)}$ and $\overline{\mathcal{R}(B^*)}$ are orthogonally complemented, so $\overline{\mathcal{R}(T^*)}$
 is orthogonally complemented. Now we consider $T\hat{X}=S$. Due to Theorem \ref{t22}, the equation $T\hat{X}=S$ has a reduced solution if $\mathcal{R}(S)\subseteq \mathcal{R}(T)$ and $\overline{\mathcal{R}(T^*)}$ is orthogonally complemented.
 Since $\overline{\mathcal{R}(T^*)}$ is orthogonally complemented, and by $(1)$, $\mathcal{R}(S)\subseteq \mathcal{R}(T)$. Hence the equation $T\hat{X}=S$ has a reduced solution as follows:\\
 \begin{equation}\label{3.6}
 \hat{X}=P_{T^*}T^{-1}S, \hspace{0.5cm} \hat{X}^*|_{\mathcal{N}(T)}=0,
 \end{equation}
where $P_{T^*}$ is the projection onto $\overline{\mathcal{R}(T^*)}$.\\ It follows from \eqref{3.5} that
\begin{equation}
P_{T^*}=\left[
 \begin{array}{cc}
 P_{A^*} & 0 \\
 0 & P_{B^*} \\
 \end{array}
 \right]
 \end{equation}
From \eqref{3.6}, we conclude that $W=Z=0$. Therefore, $AX+BY=C$ has a nontrivial solution.\\
\\$(2) \Rightarrow (1)$ is clear.\\
 Now we assume that one of the above statements holds. It follows from by Theorem \ref{t22}that
 $$CC^* \leq \lambda (AA^*+BB^*)$$
for some $\lambda>0$.
\end{proof}
The equation $AXA^{*}+BYB^{*}=C$ is a generalization of the equation $AXA^*=C$ that has been studied for matrices in \cite{ChangWang, DengHu, XuWeiZheng} .
Using Moore-Penrose inverse, Farid et al. \cite {FaridMoslehianWangWu} obtained its Hermitian solution in Hilbert $C^*$-module setup. The next theorem provides
a sufficient condition for a nonzero solution of $AXA^*+BYB^*=0$.
\begin{theorem}\label{T36}
Let $A, B, C\in \mathcal{L}(\mathscr{E})$. Let $\overline{\mathcal{R}(A^*)}$ and $\overline{\mathcal{R}(B^*)}$ be orthogonally complemented. If there exist $V_{1}, V_{2}$ and $V_{3}$ such that $\mathcal{R}(BV_{1}P_{A^*}+V_{2}N_{A})\subseteq \mathcal{R}(A)$ and $\mathcal{R}(V_{3}^*N_{B}-AV_{1}^*P_{B^*})\subseteq \mathcal{R}(B)$, then the equation $AXA^*+BYB^*=0$ has nonzero solutions for $X$ and $Y$.
\end{theorem}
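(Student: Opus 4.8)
The plan is to reduce the quadratic-looking equation $AXA^*+BYB^*=0$ to two \emph{linear} Douglas-type equations, one handled by Theorem~\ref{t22} relative to $A$ and one relative to $B$, chosen so that the two resulting terms cancel. Using the hypotheses I would first invoke Theorem~\ref{t22} twice: since $\overline{\mathcal{R}(A^*)}$ is orthogonally complemented and $\mathcal{R}(BV_1P_{A^*}+V_2N_A)\subseteq\mathcal{R}(A)$, the equation $AX=BV_1P_{A^*}+V_2N_A$ has a solution $X$; and since $\overline{\mathcal{R}(B^*)}$ is orthogonally complemented and $\mathcal{R}(V_3^*N_B-AV_1^*P_{B^*})\subseteq\mathcal{R}(B)$, the equation $BZ=V_3^*N_B-AV_1^*P_{B^*}$ has a solution $Z$. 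I would then set $Y:=Z^*$, so that $BY^*=V_3^*N_B-AV_1^*P_{B^*}$.

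The heart of the argument is then two short computations exploiting the identities $P_{A^*}A^*=A^*$, $N_AA^*=0$, $BN_B=0$ and $BP_{B^*}=B$ (the last two because $\mathcal{R}(N_B)=\overline{\mathcal{R}(B^*)}^{\perp}=\mathcal{N}(B)$ under the complementation hypothesis). Multiplying the first equation on the right by $A^*$ gives
\[
AXA^*=(BV_1P_{A^*}+V_2N_A)A^*=BV_1A^*,
\]
the term $V_2N_A$ dying against $A^*$. Taking adjoints in the second equation yields $YB^*=N_BV_3-P_{B^*}V_1A^*$, whence
\[
BYB^*=B\bigl(N_BV_3-P_{B^*}V_1A^*\bigr)=-BV_1A^*,
\]
the term $V_3^*N_B$ now dying against $B$. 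Adding the two displays gives $AXA^*+BYB^*=BV_1A^*-BV_1A^*=0$, so $(X,Y)$ solves the equation. Note that $V_2$ and $V_3$ serve only to make the two range inclusions hold, whereas $V_1$ is the coupling that forces the cancellation.

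It remains to guarantee that the solution is nonzero, and this is the step I expect to require the most care. From $AX=BV_1P_{A^*}+V_2N_A$ and $BY^*=V_3^*N_B-AV_1^*P_{B^*}$ one sees that a nonvanishing right-hand side forces the corresponding unknown to be nonzero; more sharply, the identity $AXA^*=BV_1A^*$ shows that $BV_1A^*\neq0$ already yields both $X\neq0$ and $Y\neq0$. I would therefore finish by arguing that the operators $V_1,V_2,V_3$ furnished by the hypothesis can be taken with $BV_1A^*\neq0$, the auxiliary $V_2,V_3$ being adjusted only so as to keep the two range inclusions valid. The hard part is precisely this compatibility—exhibiting $V_1$ with $BV_1A^*\neq0$ for which $\mathcal{R}(BV_1P_{A^*}+V_2N_A)\subseteq\mathcal{R}(A)$ and $\mathcal{R}(V_3^*N_B-AV_1^*P_{B^*})\subseteq\mathcal{R}(B)$ still persist—rather than the (purely formal) cancellation that produces the solution.
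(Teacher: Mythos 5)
Your proposal is essentially the paper's own proof: the paper likewise sets $AX=BV_{1}P_{A^*}+V_{2}N_{A}$ and $YB^*=N_{B}V_{3}-P_{B^*}V_{1}A^*$, relies on the same cancellation $(BV_{1}P_{A^*}+V_{2}N_{A})A^*+B(N_{B}V_{3}-P_{B^*}V_{1}A^*)=BV_1A^*-BV_1A^*=0$, and invokes the corrected Douglas-type result (Theorem~\ref{t22}) together with the two range-inclusion hypotheses to solve each linear equation. The nonzeroness issue you flag at the end is a genuine gap, but it is a gap in the paper itself rather than in your reconstruction: the paper's proof simply produces the reduced solutions and never argues they are nonzero (indeed with $V_1=V_2=V_3=0$ the hypotheses hold trivially and the construction yields $X=Y=0$), so the theorem is only meaningful under the implicit assumption that the $V_i$ make the right-hand sides, e.g.\ $BV_1A^*$, nonzero --- exactly the condition you identified.
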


\begin{proof}
Set $AX=BV_{1}P_{A^*}+V_{2}N_{A}$ and $YB^*=N_{B}V_{3}-P_{B^*}V_{1}A^*$. Then
\begin{equation}
(BV_{1}P_{A^*}+V_{2}N_{A})A^*+B(N_{B}V_{3}-P_{B^*}V_{1}A^*)=0.
 \end{equation}
It is therefore enough to solve $AX=BV_{1}P_{A^*}+V_{2}N_{A}$ and $YB^*=N_{B}V_{3}-P_{B^*}V_{1}A^*$.
Since $\overline{\mathcal{R}(A^*)}$ is orthogonally complemented and $\mathcal{R}(BV_{1}P_{A^*}+V_{2}N_{A})\subseteq \mathcal{R}(A)$, by utilizing true portion of \cite[Theorem 1.1]{FangYuYao}, we conclude that the equation $AX=BV_{1}P_{A^*}+V_{2}N_{A}$ has a reduced solution $X$. Similarly, since $\overline{\mathcal{R}(B^*)}$ is orthogonally complemented and $\mathcal{R}(V_{3}^*N_{B}-AV_{1}^*P_{B^*})\subseteq \mathcal{R}(B)$, so $YB^*=N_{B}V_{3}-P_{B^*}V_{1}A^*$ has a reduced solution $Y$.
\end{proof}

\begin{corollary}
Let $A, B, C\in \mathcal{L}(\mathscr{E})$, and $\overline{\mathcal{R}(A^*)}$ and $\overline{\mathcal{R}(B^*)}$ be orthogonally complemented. If $\mathcal{R}(B)=\mathcal{R}(A)$, then $AXA^*+BYB^*=0$ has a nonzero solution.
\end{corollary}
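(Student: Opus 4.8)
The plan is to read this as a special case of Theorem~\ref{T36}, but since the genuine content is the word \emph{nonzero}, I would bypass the somewhat delicate bookkeeping with $V_1,V_2,V_3$ and instead produce one explicit nonzero pair. The only input I need from the earlier theory is a single solvability statement: because $\mathcal{R}(B)=\mathcal{R}(A)$ gives in particular $\mathcal{R}(B)\subseteq\mathcal{R}(A)$, and $\overline{\mathcal{R}(A^*)}$ is orthogonally complemented, Theorem~\ref{t22} supplies an operator $M\in\mathcal{L}(\mathscr{E})$ with $AM=B$ (when $B=0$ one simply has $M=0$, which causes no trouble below).

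With such an $M$ in hand I would set $X=MM^*$ and $Y=-I$. Using $AM=B$ and its adjoint $M^*A^*=B^*$, a one-line check gives
\[
AXA^*+BYB^*=A\,MM^*\,A^*-BB^*=(AM)(AM)^*-BB^*=BB^*-BB^*=0,
\]
so the pair $(X,Y)=(MM^*,-I)$ solves $AXA^*+BYB^*=0$, both entries being adjointable. Since $Y=-I\neq 0$, this solution is automatically nonzero, which is exactly what is required; note that $X$ happens to be positive as well, though that is not needed here.

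The step I expect to be the real obstacle is precisely guaranteeing nonzeroness, and it is instructive to see why the most naive route through Theorem~\ref{T36} is not quite enough. Taking $V_2=V_3=0$ together with $\mathcal{R}(A)=\mathcal{R}(B)$ makes both range inclusions $\mathcal{R}(BV_1P_{A^*})\subseteq\mathcal{R}(A)$ and $\mathcal{R}(AV_1^*P_{B^*})\subseteq\mathcal{R}(B)$ hold automatically for every $V_1$; but the choice $V_1=I$ then forces the equations $AX=BP_{A^*}$ and $YB^*=-P_{B^*}A^*$, whose reduced solutions both vanish exactly when $BA^*=0$ --- a situation that can occur even with $\mathcal{R}(A)=\mathcal{R}(B)\neq\{0\}$. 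Taking $Y=-I$ (equivalently, letting $X=MZM^*$ and $Y=-Z$ with $Z=I$, which annihilates $AXA^*+BYB^*$ for \emph{any} $Z$) sidesteps this degeneracy once and for all, so I would favor the explicit construction above over a literal appeal to Theorem~\ref{T36}.
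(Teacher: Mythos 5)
Your proof is correct, and it takes a genuinely different route from the paper's. The paper does exactly what your last paragraph anticipates: it invokes Theorem~\ref{T36} with $V_2=A$, $V_3=B^*$ and ``$V_1$ arbitrary''; since $AN_A=0$ and $N_BB^*=0$, this is effectively your $V_2=V_3=0$ situation, so Theorem~\ref{T36} reduces to solving $AX=BV_1P_{A^*}$ and $YB^*=-P_{B^*}V_1A^*$, which the range equality makes solvable for every $V_1$. You instead apply Theorem~\ref{t22} once to factor $B=AM$ with $M$ adjointable, and exhibit the explicit pair $(X,Y)=(MM^*,-I)$. Your route buys three things: it needs only the inclusion $\mathcal{R}(B)\subseteq\mathcal{R}(A)$ and only the complementability of $\overline{\mathcal{R}(A^*)}$, so it proves a slightly stronger statement; it produces $X\geq 0$; and, most substantively, it genuinely settles nonzeroness, which the paper's route does not --- the word ``nonzero'' in Theorem~\ref{T36} is not actually delivered by its proof, since for any $V_1$ with $BV_1P_{A^*}=0=P_{B^*}V_1A^*$ (for instance $V_1=0$, or $V_1=I$ whenever $BA^*=0$) the reduced solutions constructed there are both zero. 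Your claim that $BA^*=0$ can coexist with $\mathcal{R}(A)=\mathcal{R}(B)\neq\{0\}$ is right: take $A=E_{11}$ and $B=E_{12}$ in $M_2(\mathbb{C})$. The paper's argument can be repaired, but only by a judicious (not arbitrary) choice such as $V_1=B^*A$, for which $BV_1P_{A^*}=BB^*A\neq 0$ unless $A=0$ (if $BB^*A=0$ then $(B^*A)^*(B^*A)=A^*BB^*A=0$, so $B^*A=0$, and equal ranges then force $A=0$); your construction sidesteps this entirely. Note also that when $B\neq 0$ your pair has both components nonzero, since $AXA^*=BB^*\neq 0$ forces $MM^*\neq 0$, so you even meet the stronger componentwise reading ``nonzero $X$ and $Y$'' of Theorem~\ref{T36}; when $B=0$ the hypothesis $\mathcal{R}(A)=\mathcal{R}(B)$ gives $A=0$ and every pair is a solution, so nothing is lost. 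What the paper's proof buys, by contrast, is only brevity and uniformity: the corollary appears as a literal specialization of Theorem~\ref{T36}.
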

\begin{proof}
It is sufficient to put $V_2=A$, $V_3=B^*$ and consider $V_1$ as an arbitrary operator in Theorem \ref{T36}.
\end{proof}

The next theorem yields a solution of the equation $AXA^*+BYB^*=C$.
\begin{theorem}\label{t37}
Let $A, B, C\in \mathcal{L}(\mathscr{E})$. Suppose that $\overline{\mathcal{R}(A^*)}$, $\overline{\mathcal{R}(A)}$, $\overline{\mathcal{R}(B^*)}$ and $\overline{\mathcal{R}(B)}$ are orthogonally complemented. Suppose that $\mathcal{R}(C)\subseteq {\mathcal{R}(B)}$, $\mathcal{R}(C^*)\subseteq {\mathcal{R}(A)}$ and
$\mathcal{R}(C^*P_A)\subseteq {\mathcal{N}(B^*)}$. The equation $AXA^*+BYB^*=C$ has a solution if and only if $\mathcal{R}(CN_{B^*})\subseteq\mathcal{R}(A)$ and
 $\mathcal{R}(C^*N_{A^*})\subseteq \mathcal{R}(B)$.
\end{theorem}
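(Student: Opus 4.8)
The proof plan splits along the two implications, with necessity routine and sufficiency carrying the technical weight.

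For necessity I would assume $AXA^*+BYB^*=C$ and exploit that $N_{B^*}$ and $N_{A^*}$ annihilate $B^*$ and $A^*$ on the right. Since $N_{B^*}=I-P_B$ and $B^*P_B=B^*$, we have $B^*N_{B^*}=0$, so right multiplication by $N_{B^*}$ kills the $B$-term and gives $CN_{B^*}=A\bigl(XA^*N_{B^*}\bigr)$, whence $\mathcal{R}(CN_{B^*})\subseteq\mathcal{R}(A)$. Passing to adjoints and using $A^*N_{A^*}=0$ yields $C^*N_{A^*}=B\bigl(Y^*B^*N_{A^*}\bigr)$ and $\mathcal{R}(C^*N_{A^*})\subseteq\mathcal{R}(B)$. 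This gives both conditions and uses none of the standing hypotheses.

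For sufficiency I would first convert the hypotheses into projection identities. From $\mathcal{R}(C)\subseteq\mathcal{R}(B)$ and $\mathcal{R}(C^*)\subseteq\mathcal{R}(A)$ one gets $N_{B^*}C=0$ and $CN_{A^*}=0$, i.e.\ $C=P_BC=CP_A$; from $\mathcal{R}(C^*P_A)\subseteq\mathcal{N}(B^*)$ one gets $B^*C^*P_A=0$, hence $P_ACP_B=0$; and each of the hypotheses $\mathcal{R}(CN_{B^*})\subseteq\mathcal{R}(A)$ and $\mathcal{R}(C^*N_{A^*})\subseteq\mathcal{R}(B)$ forces $N_{A^*}CN_{B^*}=0$. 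Combining these I would establish the key identities $CN_{B^*}=P_AC$ and $CP_B=N_{A^*}C$ (writing $CP_B=(P_A+N_{A^*})CP_B$ and simplifying). Then the range-orthogonal decomposition $C=P_AC+N_{A^*}C$ splits $C$ into an ``$A$-block'' $P_AC=CN_{B^*}$ and a ``$B$-block'' $N_{A^*}C=CP_B$, and the plan is to solve $AXA^*=P_AC$ and $BYB^*=N_{A^*}C$ separately by Theorem \ref{t23} (with the pairs $(A,A^*)$ and $(B,B^*)$) and add. For the $B$-block both hypotheses of Theorem \ref{t23} are at hand: $\mathcal{R}(N_{A^*}C)=\mathcal{R}(CP_B)\subseteq\mathcal{R}(C)\subseteq\mathcal{R}(B)$ is the range condition, while $\mathcal{R}\bigl((N_{A^*}C)^*\bigr)=\mathcal{R}(C^*N_{A^*})\subseteq\mathcal{R}(B)$ is precisely the hypothesis $\mathcal{R}(C^*N_{A^*})\subseteq\mathcal{R}(B)$. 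For the $A$-block the range condition $\mathcal{R}(P_AC)\subseteq\mathcal{R}(A)$ is exactly $\mathcal{R}(CN_{B^*})\subseteq\mathcal{R}(A)$; the remaining co-range condition $\mathcal{R}\bigl((P_AC)^*\bigr)=\mathcal{R}(C^*P_A)\subseteq\mathcal{R}(A)$ I would attack through the identity $C^*P_A=N_{B^*}C^*$ (the adjoint of $CN_{B^*}=P_AC$) together with $C^*=P_AC^*$.

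I expect this co-range verification for the $A$-block to be the main obstacle. The available identities place $\mathcal{R}(C^*P_A)$ only inside $\overline{\mathcal{R}(A)}\cap\mathcal{N}(B^*)$, whereas Theorem \ref{t23} demands containment in $\mathcal{R}(A)$ itself, so the gap between $\mathcal{R}(A)$ and its closure must be bridged. I would handle this by applying the reduced-solution form of Theorem \ref{t22} to the factored equation $AZ=P_AC$ and then peeling off the second factor $A^*$; should the peeling fail because the co-range escapes $\mathcal{R}(A)$, I would instead introduce the cross terms $BVP_{A^*}$ and $-P_{B^*}VA^*$ exactly as in the homogeneous construction of Theorem \ref{T36}, whose contributions cancel in $AXA^*+BYB^*$ while supplying the freedom to push the auxiliary operators into $\mathcal{R}(A)$ and $\mathcal{R}(B)$. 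Finally I would add the homogeneous solutions furnished by Theorem \ref{T36} to record the general solution.
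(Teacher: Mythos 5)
Your necessity half matches the paper's argument (right-multiplication by $N_{B^*}$ and $N_{A^*}$) and is correct, and your projection identities ($C=P_BC=CP_A$, $P_ACP_B=0$, $N_{A^*}CN_{B^*}=0$, hence $P_AC=CN_{B^*}$ and $N_{A^*}C=CP_B$) are exactly the algebra underlying the paper's sufficiency proof. The gap is in your central step: Theorem \ref{t23} cannot be invoked on $AXA^*=P_AC$ or on $BYB^*=N_{A^*}C$ with the inclusions you verify. That theorem demands that \emph{one} of the two range inclusions hold for the \emph{closure}: either $\mathcal{R}(C)\subseteq\mathcal{R}(A)$ together with $\overline{\mathcal{R}(C^*)}\subseteq\mathcal{R}(B^*)$, or $\overline{\mathcal{R}(C)}\subseteq\mathcal{R}(A)$ together with $\mathcal{R}(C^*)\subseteq\mathcal{R}(B^*)$. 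For the $B$-block you establish only $\mathcal{R}(N_{A^*}C)\subseteq\mathcal{R}(B)$ and $\mathcal{R}(C^*N_{A^*})\subseteq\mathcal{R}(B)$, both without closure, so your assertion that ``both hypotheses of Theorem \ref{t23} are at hand'' is not correct; the $A$-block has the identical defect. Since the hypotheses of Theorem \ref{t37} supply no closed-range or closure information whatsoever, this is a genuine obstruction to the one-shot use of Theorem \ref{t23}, not a formality. Incidentally, your diagnosis of the $A$-block is also off: the co-range inclusion you call the ``main obstacle'' is free, since $\mathcal{R}(C^*P_A)\subseteq\mathcal{R}(C^*)\subseteq\mathcal{R}(A)$ directly from the standing hypothesis; the true obstacle for both blocks is the closure requirement just described.

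What you relegate to a fallback is essentially the paper's actual proof, and it exists precisely to evade this closure requirement. The paper substitutes $\hat{X}=XA^*$, $\hat{Y}=BY$ and solves $A\hat{X}+\hat{Y}B^*=C$ by Theorem \ref{t32}, whose hypotheses $\mathcal{R}(CN_{B^*})\subseteq\mathcal{R}(A)$ and $\mathcal{R}(P_BC^*)\subseteq\mathcal{R}(B)$ (the latter deduced from $P_BC^*P_A=0$ and $\mathcal{R}(C^*N_{A^*})\subseteq\mathcal{R}(B)$) involve no closures, because Theorem \ref{t32} rests on the one-sided Theorem \ref{t22}. It then peels off the inner factors by solving $XA^*=\hat{X}$ and $Y^*B^*=\hat{Y}^*$ via Theorem \ref{t23} with the identity operator as one of the factors: there the closure hypothesis $\overline{\mathcal{R}(\hat{X})}\subseteq\mathcal{R}(I)=\mathscr{E}$ is trivially satisfied, and only the non-closure conditions $\mathcal{R}(\hat{X}^*)\subseteq\mathcal{R}(A)$ and $\mathcal{R}(\hat{Y})\subseteq\mathcal{R}(B)$ remain, which is exactly where the standing hypotheses $\mathcal{R}(C^*)\subseteq\mathcal{R}(A)$ and $\mathcal{R}(C)\subseteq\mathcal{R}(B)$ enter. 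Your other contingency, adding the cross terms $BV_1P_{A^*}$ and $-P_{B^*}V_1A^*$ from Theorem \ref{T36}, cannot rescue the one-shot argument: by construction those terms cancel identically in $AXA^*+BYB^*$, so they supply homogeneous solutions but cannot create the missing closure inclusions. To have a complete proof you must actually carry out the factor-and-peel route rather than leave it as a contingency.
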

\begin{proof}
 Note that $P_BC^*P_A=0$, since
$\mathcal{R}(C^*P_A)\subseteq {\mathcal{N}(B^*)}$.

($\Longrightarrow$) We assume that there exist $X$ and $Y$ such that $AXA^*+BYB^*=C$. So $AX^*A^*+BY^*B^*=C^*$.
By postmultiplying both sides of equation $AX^*A^*+BY^*B^*=C^*$ by $N_{A^*}$, we get
 \begin{equation}
AX^*A^* N_{A^*}+BY^*B^*N_{A^*}=C^*N_{A^*},
 \end{equation}
which gives rise to $BY^*B^*N_{A^*}=C^*N_{A^*}$. Hence $\mathcal{R}(C^*N_{A^*})\subseteq \mathcal{R}(B)$.
A similar argument yields that
\begin{equation}
AYA^*N_{B^*}=CN_{B^*},
 \end{equation}
whence $\mathcal{R}(CN_{B^*})\subseteq \mathcal{R}(A)$.

($\Longleftarrow$) By taking $\hat{X}=XA^*$ and $\hat{Y}=BY$, the equation $AXA^*+BYB^*=C$ is simplified to $A\hat{X}+\hat{Y}B^*=C$. We consider the latter equation.
We have
$$\mathcal{R}(P_BC^*)=\mathcal{R}(P_BC^*(N_{A^*}+P_A))=\mathcal{R}(P_BC^*N_{A^*}+P_BC^*P_A)=\mathcal{R}(P_BC^*N_{A^*}).$$

On the other hand, we have\\
\begin{align*}
\mathcal{R}(B)&\supseteq\mathcal{R}(C^*N_{A^*})=\mathcal{R}((P_B+N_{B^*})C^*N_{A^*})\\
&=\mathcal{R}(P_BC^*N_{A^*}+N_{B^*}C^*N_{A^*})=\mathcal{R}(P_BC^*N_{A^*}). \end{align*}
\\Thus $\mathcal{R}(P_BC^*)\subseteq \mathcal{R}(B)$. Note that $N_{B^*}C^*N_{A^*}=0$, because $\mathcal{R}(C^*N_{A^*})\subseteq \mathcal{R}(B)\subseteq\overline{\mathcal{R}(B)}=\mathcal{N}(N_{B^*})$.\\
 Since $\mathcal{R}(CN_{B^*})\subseteq \mathcal{R}(A)$ and $\mathcal{R}(P_BC^*)\subseteq \mathcal{R}(B)$, hence the assumptions of Theorem \ref{t32} are fulfilled. Hence, $\hat{X}=P_{A^*}A^{-1}P_{A}CN_{B^*}$ and $\hat{Y^*}=P_{B^*}B^{-1}P_BC^*$ is a solution of $A\hat{X}+\hat{Y}B^*=C$. Thus we reach the equations
\begin{equation}\label{sec4eq3}
XA^*=P_{A^*}A^{-1}P_{A}CN_{B^*},
\end{equation}
and
\begin{equation}\label{sec4eq4}
Y^*B^*=P_{B^*}B^{-1}P_BC^*.
\end{equation}
Since $\mathcal{R}(C^*)\subseteq {\mathcal{R}(A)}$, hence we can apply Theorem \ref{t23} to Equation \eqref{sec4eq3}.
Similarly, since $\mathcal{R}(C)\subseteq \mathcal{R}(B)$, Equation \eqref{sec4eq4} has a reduced solution according to Theorem \ref{t23}.
\end{proof}

\begin{lemma}\label{l13}
 Let $A, B\in \mathcal{L}(\mathscr{E}), T=\left[
 \begin{array}{cc}
 A & -B \\
 0 & 0 \\
 \end{array}
 \right]$ and $S=\left[
 \begin{array}{cc}
 A & B \\
 0 & 0 \\
 \end{array}
 \right]$. Suppose that $ \overline{\mathcal{R}(T^*)}$ and $\overline{\mathcal{R}(S^*)}$ are orthogonally complemented and $P$ is the orthogonal projection of $\mathscr{E}\oplus\mathscr{E}$ onto $\mathcal{N}(T)$. If $P\mathcal{N}(S)\subseteq\mathcal{N}(S)$ and $\overline{\mathcal{R}(SPS^*)^{\frac{1}{2}}}$ is orthogonally complemented
 then there exist operators $X, Y$ and $Z$ in $\mathcal{L}(\mathscr{E})$ with $X\geq0$ and $Y\geq0$ such that
 $$ \mathcal{R}(A)\cap \mathcal{R}(B)=\mathcal{R}(AX)+\mathcal{R}(AZ^*)=\mathcal{R}(BZ)+\mathcal{R}(BY)$$
 and
$$\mathcal{R}(A)\cap \mathcal{R}(B)\supseteq\mathcal{R}((AXA^*)^\frac{1}{2}),~ \mathcal{R}(A)\cap \mathcal{R}(B)\supseteq\mathcal{R}((BYB^*)^\frac{1}{2})\,.$$
\end{lemma}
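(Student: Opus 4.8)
The plan is to realize $\mathcal{R}(A)\cap\mathcal{R}(B)$ through the projection $P$ and to read off $X,Y,Z$ directly from its block entries. Writing $P=\begin{pmatrix} P_{11} & P_{12}\\ P_{21} & P_{22}\end{pmatrix}$ for the block decomposition (relative to $\mathscr{E}\oplus\mathscr{E}$) of the orthogonal projection onto $\mathcal{N}(T)$, I note that $P\ge 0$ forces $P_{11},P_{22}\ge 0$ and $P_{12}^*=P_{21}$. So I would set $X=P_{11}$, $Y=P_{22}$, $Z=P_{21}$, all in $\mathcal{L}(\mathscr{E})$; then $X,Y\ge 0$ and $Z^*=P_{12}$ automatically. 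The two structural facts I would extract at the outset are $TP=0$ (since $\mathcal{R}(P)=\mathcal{N}(T)$) and $P^2=P$. The first gives the key identities $AP_{11}=BP_{21}$ and $AP_{12}=BP_{22}$, that is $AX=BZ$ and $AZ^*=BY$; the second gives $P_{11}=P_{11}^2+P_{12}P_{21}$ and $P_{22}=P_{21}P_{12}+P_{22}^2$.

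For the range equalities I would use that $\mathcal{N}(T)=\{(u,v):Au=Bv\}$, so $\mathcal{R}(A)\cap\mathcal{R}(B)$ is precisely the set of first coordinates $Au$ with $(u,v)\in\mathcal{N}(T)$. The inclusion $\mathcal{R}(AP_{11})+\mathcal{R}(AP_{12})\subseteq\mathcal{R}(A)\cap\mathcal{R}(B)$ is immediate from $AP_{11}=BP_{21}$ and $AP_{12}=BP_{22}$. For the reverse inclusion, given $w=Au_0=Bv_0$ we have $(u_0,v_0)\in\mathcal{N}(T)=\mathcal{R}(P)$, hence $P(u_0,v_0)=(u_0,v_0)$, so $u_0=P_{11}u_0+P_{12}v_0$ and $w=Au_0=AP_{11}u_0+AP_{12}v_0$. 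This yields $\mathcal{R}(A)\cap\mathcal{R}(B)=\mathcal{R}(AP_{11})+\mathcal{R}(AP_{12})=\mathcal{R}(AX)+\mathcal{R}(AZ^*)$, and combining with $AX=BZ$, $AZ^*=BY$ it also equals $\mathcal{R}(BZ)+\mathcal{R}(BY)$.

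Next I would compute $SPS^*$. Using $AP_{11}=BP_{21}$ and $AP_{12}=BP_{22}$ one finds $SP=\begin{pmatrix} 2AP_{11} & 2AP_{12}\\ 0 & 0\end{pmatrix}$, and then substituting $P_{11}=P_{11}^2+P_{12}P_{21}$ (respectively $P_{22}=P_{21}P_{12}+P_{22}^2$) a direct multiplication collapses to the single clean identity
\[
SPS^*=\begin{pmatrix} 4AXA^* & 0\\ 0 & 0\end{pmatrix}=\begin{pmatrix} 4BYB^* & 0\\ 0 & 0\end{pmatrix}.
\]
Consequently $\mathcal{R}((SPS^*)^{1/2})=\mathcal{R}((AXA^*)^{1/2})=\mathcal{R}((BYB^*)^{1/2})$ in the first coordinate, while $\mathcal{R}(SP)=\mathcal{R}(A)\cap\mathcal{R}(B)$ by the previous paragraph. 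Thus the whole matter reduces to the equality $\mathcal{R}(SP)=\mathcal{R}((SPS^*)^{1/2})$, i.e. $\mathcal{R}(M)=\mathcal{R}(|M^*|)$ for $M:=SP$, which is exactly Corollary \ref{c32} applied in the module $\mathscr{E}\oplus\mathscr{E}$.

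To invoke Corollary \ref{c32} I must check that $\overline{\mathcal{R}(M^*)}$ and $\overline{\mathcal{R}(|M^*|)}$ are orthogonally complemented. The second is $\overline{\mathcal{R}((SPS^*)^{1/2})}$, a hypothesis. The first, $\overline{\mathcal{R}((SP)^*)}=\overline{\mathcal{R}(PS^*)}$, is where $P\mathcal{N}(S)\subseteq\mathcal{N}(S)$ is needed, and this is the step I expect to be the main obstacle. I would reformulate that condition as commutation: since $P$ is self-adjoint, $P\mathcal{N}(S)\subseteq\mathcal{N}(S)$ forces $P$ to commute with the orthogonal projection $Q$ onto $\mathcal{N}(S)=\overline{\mathcal{R}(S^*)}^{\perp}$, and because $\overline{\mathcal{R}(S^*)}$ is orthogonally complemented, $P$ then commutes with $P_{S^*}=I-Q$ as well. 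Hence $R:=PP_{S^*}=P_{S^*}P$ is an orthogonal projection; using $P_{S^*}S^*=S^*$ one gets $PS^*=RS^*$, so $\mathcal{R}(PS^*)=R\,\mathcal{R}(S^*)$ is dense in $\mathcal{R}(R)$ and therefore $\overline{\mathcal{R}(PS^*)}=\mathcal{R}(R)$ is orthogonally complemented. With both conditions verified, Corollary \ref{c32} gives $\mathcal{R}(SP)=\mathcal{R}((SPS^*)^{1/2})$, and reading this in the first coordinate yields $\mathcal{R}(A)\cap\mathcal{R}(B)=\mathcal{R}((AXA^*)^{1/2})=\mathcal{R}((BYB^*)^{1/2})$; in particular the two claimed inclusions hold (in fact with equality). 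I anticipate that the algebraic identities are routine and that matching the orthogonal-complementation hypotheses of Corollary \ref{c32} is the delicate part.
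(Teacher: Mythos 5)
Your proposal is correct and takes essentially the same route as the paper: you read $X,Y,Z$ off the blocks of $P$, prove $\mathcal{R}(A)\cap\mathcal{R}(B)=\mathcal{R}(AX)+\mathcal{R}(AZ^*)$ from $TP=0$ together with $P$ acting as the identity on $\mathcal{N}(T)$, reduce the square-root statements to $\mathcal{R}(M)=\mathcal{R}(|M^*|)$ for $M=SP$ (the paper uses $W=\tfrac12 SP$), and close with Corollary~\ref{c32}, exactly as the paper does. The only (minor) divergence is in verifying that $\overline{\mathcal{R}(PS^*)}$ is orthogonally complemented: you turn $P\mathcal{N}(S)\subseteq\mathcal{N}(S)$ into commutation of $P$ with $P_{S^*}$, so that $R=PP_{S^*}$ is a projection with $\mathcal{R}(R)=\overline{\mathcal{R}(PS^*)}$, whereas the paper exhibits the orthogonal decomposition directly by writing $x=Py_1+\bigl(Py_2+(I-P)x\bigr)$ with $y_1\in\overline{\mathcal{R}(S^*)}$, $y_2\in\mathcal{N}(S)$ and noting the second summand lies in $\mathcal{N}(SP)=\overline{\mathcal{R}(PS^*)}^{\perp}$ --- both verifications are valid and use the invariance hypothesis in the same essential way.
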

\begin{proof}

 By hypothesis $ \overline{\mathcal{R}(T^*)}$ is orthogonally complemented.
 Hence $\mathcal{N}(T)$ is orthogonally complemented.
 Let $P=\left[
 \begin{array}{cc}
 X & Z^* \\
 Z & Y \\
 \end{array}
 \right]$ be the orthogonal projection of $\mathscr{E}\oplus\mathscr{E}$ onto $\mathcal{N}(T)$. Since $TP=0$, we have
 $AX=BZ$ and $AZ^*=BY$. Therefore
 $$\mathcal{R}(A)\cap\mathcal{R}(B)\subseteq\mathcal{R}(AX)+\mathcal{R}(AZ^*).$$
Equality holds here. In fact, if $w\in\mathcal{R}(A)\cap\mathcal{R}(B)$, then $w=Au=Bv$. Hence
$( u,v) \in \mathcal{N}(T)$. It follows that $u=Xu+Z^*v$, so
$$w=Au\in \mathcal{R}(AX)+\mathcal{R}(AZ^*).$$
Now let $W=\left[
 \begin{array}{cc}
 AX & AZ^* \\
 0 & 0\\
 \end{array}
 \right]$. Then $W=\frac{1}{2}SP$. We show that $\overline{\mathcal{R}(W^*)}$ is orthogonally complemented. To this end, let $x\in \mathscr{E}$. Then there are $y_1\in \overline{\mathcal{R}(S^*)} $ and $y_2\in \mathcal{N}(S)$ such that $x=y_1+y_2$. This gives $Px=Py_1+Py_2$ and so $x=Py_1+Py_2+(I-P)x$, observe that $Py_1\in \overline{\mathcal{R}(PS^*)}$ and $Py_2+(I-P)x\in \mathcal{N}(SP)$ by virtue of $P\mathcal{N}(S)\subseteq\mathcal{N}(S)$.\\
 We have
 \begin{align*}
 \mathcal{R}(BZ )+\mathcal{R}(BY)\oplus 0&=\mathcal{R}(AX)+\mathcal{R}(AZ^*)\oplus 0\\
 & \qquad \qquad (\mbox{by~} X^2+Z^*Z = X \mbox{~(deduced~from~} P^2= P))\\
 &=\mathcal{R}(W)\\
 &=\mathcal{R}(|W^*|) \qquad \qquad\qquad \quad \qquad (\mbox{by Corollary} \ref{c32}) \\
 &=\mathcal{R}\left(\left[
 \begin{array}{cc}
 (AXA^*)^{\frac{1}{2}} & 0\\
 0 & 0\\
 \end{array}
 \right]\right)\\
 &=\mathcal{R}(AXA^*)^{\frac{1}{2}}\oplus 0
 \end{align*}
The other relations can be proved in a similar manner.
\end{proof}

Employing the above lemma, we discuss the existence of nontrivial solutions for the equation $AXA^*+BYB^*=CZ$ having three unknown operators.
\begin{theorem}\label{l14}
 Let $A, B, C\in \mathcal{L}(\mathscr{E}), T=\left[
 \begin{array}{cc}
 A & -B \\
 0 & 0 \\
 \end{array}
 \right]$ and $S=\left[
 \begin{array}{cc}
 A & B \\
 0 & 0 \\
 \end{array}
 \right]$. Suppose that $ \overline{\mathcal{R}(T^*)}$ and $\overline{\mathcal{R}(S^*)}$ are orthogonally complemented and $P$ is the orthogonal projection of $\mathscr{E}\oplus\mathscr{E}$ onto $\mathcal{N}(T)$. If $P\mathcal{N}(S)\subseteq\mathcal{N}(S)$ and $\overline{\mathcal{R}(SPS^*)^{\frac{1}{2}}}$ is orthogonally complemented and $ 0\neq \mathcal{R}(A)\cap \mathcal{R}(B)\subseteq \mathcal{R}(C)$, then there exist non-zero operators $X,Y\geq0$ in $\mathcal{L}(\mathscr{E})$ and a non-zero solution $Z $ in $\mathcal{L}(\mathscr{E})$ such that
\begin{equation}
AXA^*+BYB^*=CZ.
\end{equation}
\end{theorem}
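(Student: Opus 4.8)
The plan is to feed the operators produced by Lemma~\ref{l13} into a single Douglas-type solvability step. First I would invoke Lemma~\ref{l13}, whose hypotheses coincide verbatim with those of the present theorem: writing the orthogonal projection of $\mathscr{E}\oplus\mathscr{E}$ onto $\mathcal{N}(T)$ as $P=\left[\begin{smallmatrix}X & Z^*\\ Z& Y\end{smallmatrix}\right]$, the relation $TP=0$ gives $AX=BZ$ and $AZ^*=BY$, and Lemma~\ref{l13} supplies $X\ge 0$ and $Y\ge 0$. I would also record the equalities extracted in the proof of Lemma~\ref{l13}, namely $\mathcal{R}(A)\cap\mathcal{R}(B)=\mathcal{R}((AXA^*)^{\frac{1}{2}})=\mathcal{R}((BYB^*)^{\frac{1}{2}})$, since these carry the nontriviality information.

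Next I would put $M=AXA^*+BYB^*$ and verify two facts: that $M\neq 0$ (with $X,Y\neq 0$), and that $\mathcal{R}(M)\subseteq\mathcal{R}(C)$. For nontriviality, the hypothesis $0\neq\mathcal{R}(A)\cap\mathcal{R}(B)=\mathcal{R}((AXA^*)^{\frac{1}{2}})$ forces $AXA^*\neq 0$, hence $X\neq 0$; since $M\ge AXA^*\ge 0$ with $AXA^*\neq 0$, also $M\neq 0$, and the same argument applied to $(BYB^*)^{\frac{1}{2}}$ gives $Y\neq 0$. For the range inclusion I would use the two factorizations coming from $TP=0$: substituting $BY=AZ^*$ yields $M=A(XA^*+Z^*B^*)$, so $\mathcal{R}(M)\subseteq\mathcal{R}(A)$, while substituting $AX=BZ$ yields $M=B(ZA^*+YB^*)$, so $\mathcal{R}(M)\subseteq\mathcal{R}(B)$. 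Therefore $\mathcal{R}(M)\subseteq\mathcal{R}(A)\cap\mathcal{R}(B)\subseteq\mathcal{R}(C)$ by the standing assumption.

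Finally I would solve $CZ'=M$. Reading $C$ as the ``$A$'' and $M$ as the ``$C$'' of Theorem~\ref{t22}, the inclusion $\mathcal{R}(M)\subseteq\mathcal{R}(C)$ produces a (reduced) solution $Z'\in\mathcal{L}(\mathscr{E})$ with $CZ'=M$; and since $M\neq 0$, necessarily $Z'\neq 0$. Taking $Z=Z'$ then gives $AXA^*+BYB^*=CZ$ with $X,Y\ge 0$ nonzero and $Z$ nonzero, which is exactly the assertion.

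The one delicate point is this last solvability step. Theorem~\ref{t22} converts $\mathcal{R}(M)\subseteq\mathcal{R}(C)$ into the existence of an \emph{adjointable} solution only when $\overline{\mathcal{R}(C^*)}$ is orthogonally complemented; and, as the examples of Section~2 show, in a Hilbert $C^*$-module the inclusion $\mathcal{R}(M)\subseteq\mathcal{R}(C)$ does \emph{not} by itself yield the domination $MM^*\le\lambda CC^*$ that would otherwise force a bounded solution. Consequently I expect the main obstacle to be ensuring the complementation of $\overline{\mathcal{R}(C^*)}$: one should either list it among the hypotheses or derive it from the given data (for instance from the structure of $\mathcal{R}(A)\cap\mathcal{R}(B)$, whose closure is already complemented by the assumption on $\overline{\mathcal{R}(SPS^*)^{\frac{1}{2}}}$). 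Everything else—the construction of $X,Y,Z$ and the verification that they are nonzero—is routine once Lemma~\ref{l13} is in hand.
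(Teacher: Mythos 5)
Your proposal follows essentially the same route as the paper's own proof: invoke Lemma~\ref{l13} to produce $X,Y\geq 0$, deduce $\mathcal{R}(AXA^*+BYB^*)\subseteq\mathcal{R}(C)$, apply Theorem~\ref{t22} to solve $CZ=AXA^*+BYB^*$, and get nontriviality of $X$, $Y$, $Z$ exactly as you argue; the only cosmetic difference is that the paper obtains the range inclusion via $\mathcal{R}(AXA^*)\subseteq\mathcal{R}\bigl((AXA^*)^{\frac{1}{2}}\bigr)\subseteq\mathcal{R}(A)\cap\mathcal{R}(B)$ rather than via your factorizations $M=A(XA^*+Z^*B^*)=B(ZA^*+YB^*)$, and both are valid. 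The ``delicate point'' you flag is real and is not addressed in the paper either: the published proof invokes Theorem~\ref{t22} for the equation $CZ=M$ even though the requisite hypothesis that $\overline{\mathcal{R}(C^*)}$ be orthogonally complemented appears nowhere among the assumptions of Theorem~\ref{l14}, so your remark identifies a genuine omission in the paper's argument, and adding that hypothesis (or deriving it from the other data) is needed to make the final solvability step legitimate.
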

\begin{proof}
 It follows from Lemma \ref{l13} that there are $X,Y\geq0$ in $\mathcal{L}(\mathscr{E})$ such that $\mathcal{R}(A)\cap \mathcal{R}(B)\supseteq\mathcal{R}((AXA^*)^\frac{1}{2})$ and $\mathcal{R}(A)\cap \mathcal{R}(B)\supseteq\mathcal{R}((BYB^*)^\frac{1}{2})$. Since $\mathcal{R}(A)\cap \mathcal{R}(B)\subseteq \mathcal{R}(C)$, we have $\mathcal{R}((AXA^*)^\frac{1}{2})\subseteq \mathcal{R}(C) $ and $\mathcal{R}((BYB^*)^\frac{1}{2})\subseteq \mathcal{R}(C)$. Noting to $\mathcal{R}(AXA^*)\subseteq\mathcal{R}((AXA^*)^\frac{1}{2})$ and $\mathcal{R}(BYB^*)\subseteq \mathcal{R}((BYB^*)^\frac{1}{2})$, we obtain
\begin{equation}
\mathcal{R}(AXA^*+BYB^*)\subseteq \mathcal{R}(AXA^*)+\mathcal{R}(BYB^*) \subseteq \mathcal{R}(C).
\end{equation}
Employing Theorem \ref{t22}, it is concluded that there exists a reduced solution $Z$ such that $AXA^*+BYB^*=CZ$. The assumption $\mathcal{R}(A)\cap \mathcal{R}(B)\neq 0$ yields that $X$ and $Y$ are not zero by Lemma \ref{l13}. It follows from Theorem \ref{t22} that the reduced solution $Z$ is not zero too.
\end{proof}


\end{document}